\declaretheorem[style = plain]{theorem}   
\declaretheorem[style = plain,      sibling = theorem]{corollary,lemma,proposition}
\declaretheorem[style = definition, sibling = theorem]{definition, example}
\declaretheorem[style = remark]{remark,problem}
\DeclareMathOperator{\dist}{dist}
\DeclareMathOperator{\Ext}{Ext}
\DeclareMathOperator{\Hom}{Hom}
\newcommand{\Bh}{{\ensuremath{B(\Hilbert)}} }
\newcommand{\compact}{\ensuremath{\mathcal K}}
\newcommand{\compacts}{\compact}
\newcommand{\corona}{\ensuremath{\mathcal Q}} 
\newcommand{\Cstar}[1][]{\textsl{C*}{#1}} 
\newcommand{\dd}{\star\star}
\newcommand{\fsum}[2][{\FiniteSums}]{\Sum{#2}{#1}}
\newcommand{\Hilbert}{\ensuremath{\mathcal{H}}}
\newcommand{\Id}{\text{Id}}
\newcommand{\KKnuc}{\KK_{{\text{nuc}}}}
\newcommand{\KKw}{\KK_{{w}}}
\newcommand{\KK}{{\textsl{KK}}} 
\newcommand{\KL}{{\textsl{KL}}}
\newcommand{\Kh}{\compacts(\Hilbert)}
\newcommand{\Mbb}{\ensuremath{\frac{\Mb}{B}} }
\newcommand{\Mbkbk}{\ensuremath{{\Mbk}/({B\tensor\compact})}}
\newcommand{\Mbk}{\ensuremath{\Mult (B\tensor\compact)}}
\newcommand{\Mb}{\ensuremath{\Mult (B)} }
\newcommand{\Mult}{\mathcal{M}}
\newcommand{\Oinf}{\ensuremath{\O{\infty}} }
\newcommand{\Wstar}[1][]{\textsl{W*}{#1}} 
\newcommand{\Z}{\mathbb{Z}} 
\newcommand{\geqcomp}{\ensuremath{\underset{\widetilde{\quad}}{\succ}}}
\newcommand{\integers}{\mathbb Z}
\newcommand{\isom}{\cong}
\newcommand{\predual}{\phantom{}_{*}}
\newcommand{\tensor}{\otimes}
\newcommand{\vi}{\ensuremath{v_i}}
\newcommand{\vj}{\ensuremath{v_j}}
\renewcommand{\L}[1]{{\ensuremath{{ L}_{#1}} }} 
\renewcommand{\O}[1]{{\ensuremath{{\mathcal O}_{#1}} }} 
\renewcommand{\inf}{\infty} 
\renewcommand{\j}{\ensuremath{\iota}}
\renewcommand{\star}{\phantom{}^{*}}
\begin{document}

\title{\sc Separating points by inclusions of \Oinf}
\author{Dan Z. Ku\v{c}erovsk\'{y}}
\email{dkucerov@unb.ca}
\address{Dept. of Math.\\University of New Brunswick at Fredericton\\Fredericton, NB\\Canada  E3B 5A3}
\subjclass[2010]{Primary 46L05, secondary 46L35 }
\begin{abstract} We define a basis property that an inclusion of \Cstar-algebras $\Oinf\subset A$ may have, and give various conditions for the property to hold. Some applications are considered. We also give a characterization of open projections in a corona algebra.
\end{abstract}\maketitle

\section{Introductory Comments}\label{sect:Introduction}

It occurs frequently in \Cstar-algebraic problems that one has, or wishes to construct, a unitally embedded copy of \Oinf
in a given \Cstar-algebra $A.$
Very often, such embeddings are expressed
in terms of a set of preferred generators for \Oinf and the images of the generators in $A$ become the focus of attention.
To clarify the property of
the images that we will investigate here, we begin with the following
definitions.
\begin{definition} Let $A$ be a unital \Cstar-algebra. An (infinite) Cuntz
family of isometries in $A$ is a sequence $\{\vi\}_{i=1}^{\inf}$
of isometries in $A$
with the property that
$$\sum_{i=1}^{n} \vi\vi\star \leq \mathbf{1}_{A}, \qquad 1\leq n<\inf.$$
\end{definition}
If the \Cstar-algebra $A$ is \Bh for some infinite dimensional Hilbert
space \Hilbert, then an infinite Cuntz family of isometries generates \Oinf
as defined in \cite{Cuntz1977}.  Furthermore by Theorem 1.12 [ibid.], any two infinite
Cuntz families of isometries in \Bh generate isomorphic copies of
\Oinf. As Cuntz points out immediately after Theorem
1.12 [ibid.], this fact proves that \Oinf is algebraically simple. As a consequence of these
observations, every infinite Cuntz family in a \Cstar-algebra $A$ may
be viewed as the image of an infinite Cuntz family of isometries in
\Bh under an embedding of \Oinf .
\begin{definition}  Let $A$ be a \Cstar-algebra containing an infinite Cuntz
family of isometries $\{\vi \}_{i=1}^{\inf}$
and let $p_i := \vi\vi\star.$ Then we say that
$\{\vi \}_{i=1}^{\inf}$
\emph{separates the points} of $A$ when the only $x\in A$ such that
$p_i xp_j = 0$ for all $i$ and $j$ is $x = 0.$ \end{definition}
With these definitions in hand, we may formulate the fundamental problem that we investigate here as:
\begin{problem}[the basis problem]\label{problem:sep.points}
    Given an infinite Cuntz family of isometries in a \Cstar-algebra, determine when it separates the points of the algebra.\end{problem}
After presenting some technical lemmas in Section  \ref{sect:Some useful lemmas}, we present
three vignettes in which we describe how to solve \textit{\cref{problem:sep.points}}
in special situations. The first appears in Section \ref{sect:Bh}, where we
study Cuntz families in $\Bh.$ The second, appearing in Section
\ref{sect:Oinf}, investigates embeddings of \Oinf in \O{n} for finite $n.$ The third vignette, which is about embeddings of
\Oinf in a corona algebra, is in section \ref{sect:the corona}, where we also give a characterization of open projections in a corona algebra (see \cref{th:open.projs.for.corona.algs}). The final section, Section \ref{sect:extensions}, is devoted to
applications.
 Since, for the most part, distinct techniques need to be used for the different cases,  each section contains a short introduction of the techniques used in that section.

The separation of points problem is closely related to the classic problem of trying to write a given element as an infinite matrix of operators.
Suppose that we break up a given element $x$ into orthogonal summands using elements $\vi,$ much as above. Thus, consider
\begin{equation*}
  \begin{pmatrix}
    v_1 x v_1\star  & v_1 x v_2\star & \cdots & \\
    v_2 x v_1\star &  v_2 x v_2\star& \cdots & \\
  \vdots  & \vdots  & \ddots &   \\
  \end{pmatrix}
  \end{equation*}
where the matrix may be infinite. The two questions that arise have to do with the convergence and
with the injectivity of the construction. 
We will focus on the question of injectivity. In other words, if we have found that $v_ix v_j\star  = 0$ for all $i$ and $j$, can we conclude that $x=0$?
Results on injectivity are useful for deciding when an element of a \Cstar-algebra can be decomposed as a infinite matrix (the decomposition problem). The converse problem, briefly discussed in \cref{sec:matrix.decomp}, of deciding when  an infinite matrix of general form does converge to an element of a specific \Cstar-algebra (the convergence problem) is quite hard. See for example Kadison \cite[6.6]{KR2}, or the classic Schur's theorem \cite[p.174]{maddox} for some results in this direction.

The above problem depends on exactly how \Oinf is embedded in $A,$ and has some depth even if the target algebra is just $\Bh.$  We  point out some useful and tractable special cases. Moreover, we  construct interesting examples of embeddings of $\Oinf$ that separate points.
In \Cstar-algebra theory, stability is the most obvious route for unitally embedded copies of \Oinf to arise, however, it is not the case that stability is required, see \cite{ElliottHandelman1989}.
The basis problem is related to the little-studied algebraical problem of inner annihilators, and what work there is in the algebraic literature \cite{lam2014} focuses on the case of inner annihilators of elements in von Neumann regular rings.  Because the algebraic problem seems quite difficult, we  will mostly use methods based on representations, states, and/or weak limits, rather than algebra. However, the trade-off is that in most cases we can only find sufficient conditions for a solution.

Some of Kadison's techniques \cite{kaftal1990,kadison1984} for partitioning the identity in masas, which he developed for an application to diagonalization of operator matrices, get used in section \ref{sect:Some useful lemmas} and section \ref{sect:extensions}.

\section{Some useful lemmas}\label{sect:Some useful lemmas}

\begin{definition}We say a set $S$ of positive elements contained in a \Cstar-algebra $A$ is \emph{strictly positive} if for every state $\rho$ of  $A$ we have $\rho(S)\not=\{0\}.$ In other words, for every state $\rho\in S(A)$ there  exists some element of $S$ on which $\rho$ is nonzero.  \label{def:strictly.positive}\end{definition}

We note a very simple and probably fundamental formula for the hereditary subalgebra generated by a collection of projections.
\begin{lemma} The hereditary subalgebra $H$ of generated by a given  collection $\{p_\lambda\}$ of   projections in a \Cstar-algebra $B$ is equal to $\left(pB\dd p\right) \cap B  $ with $p:=\sup_\lambda p_\lambda.$ \label{lem:supremum.formula} \end{lemma}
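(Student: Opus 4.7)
The plan is to pass between two descriptions of $H$: a bottom-up description as the smallest hereditary subalgebra containing the $p_\lambda$, and the top-down description of any hereditary subalgebra as the preimage of its associated open projection. Since $p = \sup p_\lambda$ is taken in $B\dd$, the natural arena is the enveloping von Neumann algebra.

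For the inclusion $H \subseteq pB\dd p \cap B$, I would first verify that the right-hand side is itself a hereditary \Cstar-subalgebra of $B$. It is clearly a closed $\star$-subalgebra; to check the heredity, suppose $0 \le b \le a$ with $a = pap$. Then $(1-p)a(1-p)=0$, and positivity forces $(1-p)b(1-p)=0$, whence $b^{1/2}(1-p)=0$ and $b = pbp$. Since each $p_\lambda \le p$ in $B\dd$, each $p_\lambda$ sits in $pB\dd p \cap B$, so this hereditary subalgebra contains $H$.

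For the reverse inclusion, I would exploit the bijection between hereditary \Cstar-subalgebras of $B$ and open projections in $B\dd$: if $q \in B\dd$ is the open projection associated with $H$ (the strong limit of any approximate identity $(e_\mu)$ of $H$), then $H = qB\dd q \cap B$. The point is then to show that $q = p$. On one side, $p_\lambda \in H$ gives $p_\lambda \le q$ for all $\lambda$, hence $p \le q$. On the other side, each $e_\mu$ lies in $H$, which by the previous paragraph is contained in $pB\dd p$, so $pe_\mu p = e_\mu$ for every $\mu$; taking the strong limit yields $pqp = q$, i.e.\ $q \le p$. Combining the two inequalities gives $q = p$ and therefore $H = pB\dd p \cap B$.

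The only subtle point is justifying that the strong limit of the approximate identity of $H$ really is the open projection that fits the identity $H = qB\dd q \cap B$ — I would simply cite the standard correspondence (Pedersen, 3.11.10) rather than reproving it. Everything else is a bookkeeping check that $p$ absorbs the $p_\lambda$ on both sides and that positivity passes to the quotient by $1-p$; the only thing that could go wrong would be if supremum were interpreted somewhere in $B$ rather than in $B\dd$, so I would emphasize at the start that $p$ is taken in the enveloping von Neumann algebra.
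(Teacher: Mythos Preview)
Your proof is correct, but it follows a different route from the paper's. The paper argues as follows: letting $q$ denote the open projection of $H$, it characterizes $q$ as the smallest projection in $B\dd$ that is both (i) above every $p_\lambda$ and (ii) open; then it invokes Akemann's theorem that a supremum of open projections is open to conclude that $\sup_\lambda p_\lambda$ already satisfies (ii), and is obviously the least projection satisfying (i), hence equals $q$. Your argument instead bypasses Akemann entirely: you verify by hand that $pB\dd p\cap B$ is hereditary (the $b^{1/2}(1-p)=0$ computation), deduce $H\subseteq pB\dd p\cap B$, and then sandwich the open projection $q$ of $H$ between $p$ and $p$ using the approximate identity. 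The upshot is that your proof is more self-contained---it trades the citation of a nontrivial external result for a short order-theoretic squeeze---while the paper's version is more conceptual, identifying $p$ directly as the minimal open majorant. Both are valid; yours has the advantage that openness of $p$ is a consequence rather than an ingredient.
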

\begin{proof}   To every hereditary subalgebra $H\subset B$ corresponds a open projection in the double dual, which can be defined as a projection $p$ in the double dual such that $\left(pB\dd p\right) \cap B = H.$ See \cite[3.11.10]{pedersen.book}.
Since in our case the hereditary subalgebra is the smallest hereditary subalgebra containing  all of the projections $\{p_\lambda\},$ we can view $p$ as being the smallest projection in $B\dd$ satisfying the following two properties:
\begin{enumerate}\item $p\geq p_\lambda$ for all $\lambda,$ and
\item $p$ is open.
\end{enumerate}
The first of these conditions can be rephrased in terms of a well-known quantity. Indeed, the supremum  $\sup_\lambda  p_\lambda$ is by definition the smallest projection of $B\dd$ satisfying this condition. On the other hand, a projection $p_\lambda$ in $B$ is evidently an open projection for the corner $p_\lambda B p_\lambda$ that it generates in $B.$ This means that  the supremum  $ \sup_\lambda  p_\lambda$ is a supremum of open projections, and Akemann, using work of Effros, has shown that a supremum of open projections is open \cite[Prop. II.5]{Akemann1969}. Thus, $p:=\sup_\lambda  p_\lambda$ satisfies both the first and the second of the above conditions. Moreover, since it is the smallest projection satisfying the first condition, it is clearly the smallest projection satisfying both the first and the second condition. This shows that the hereditary subalgebra generated by the collection $\{p_\lambda\}$ has open projection $p=\displaystyle \sup_\lambda  p_\lambda,$ and the correspondence between open projections and hereditary subalgebras gives $H=\left(pB\dd p\right) \cap B ,$ as was to be shown.
\end{proof}

From this  lemma we can deduce an interesting corollary. We will only use it in the countable case, but it holds in general if stated carefully.
\begin{corollary} Let  $\{p_\lambda\}$ denote a set of   projections in a given \Cstar-algebra $B.$
The following are equivalent:
\begin{enumerate}\item The set of projections is strictly positive in the sense of  Definition \ref{def:strictly.positive},
\item the hereditary subalgebra generated by the set $\{p_\lambda\}$ of  projections is all of $B,$ and
\item the supremum $\displaystyle\sup_{\mathcal F} p_\lambda$ is equal to $1_{B\dd}.$
    \end{enumerate}\label{cor:three.equivalences.for.strict.positivity}\end{corollary}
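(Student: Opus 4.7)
The plan is to derive all three equivalences from \cref{lem:supremum.formula} together with the standard bijection between states of $B$ and normal states of $B\dd$. Set $p := \sup_\lambda p_\lambda$; by the lemma, the hereditary subalgebra generated by $\{p_\lambda\}$ is $H = (pB\dd p) \cap B$, so each of the three conditions should reduce to the statement $p = 1_{B\dd}$.

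For $(2) \Leftrightarrow (3)$, the easy direction is: if $p = 1_{B\dd}$, then the lemma immediately gives $H = B\dd \cap B = B$. For the converse, any $b \in (pB\dd p) \cap B$ satisfies $pbp = b$, so $H = B$ forces $pbp = b$ for every $b \in B$. Applying this to an approximate unit $(e_\alpha)$ of $B$ and passing to the $\sigma$-weak limit $e_\alpha \to 1_{B\dd}$ in $B\dd$ yields $p \cdot 1_{B\dd} \cdot p = 1_{B\dd}$, hence $p = 1_{B\dd}$.

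For $(1) \Leftrightarrow (3)$, I would use that each state $\rho$ of $B$ extends uniquely to a normal state $\tilde\rho$ of $B\dd$, with some support projection $s \in B\dd$. The key lemma-level observation is that a projection $q \in B\dd$ satisfies $\tilde\rho(q) = 0$ if and only if $q \leq 1 - s$; the nontrivial direction follows from Cauchy--Schwarz, which puts $q$ into the left kernel of $\tilde\rho$ and hence forces $qs = 0$. Since the set $\{q \in B\dd : q \leq 1 - s\}$ is closed under arbitrary lattice suprema, $\tilde\rho$ annihilates every $p_\lambda$ iff $\tilde\rho(p) = 0$. Thus $(3) \Rightarrow (1)$ is immediate: $\tilde\rho(1_{B\dd}) = 1$ cannot vanish. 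For the converse, if $p \neq 1_{B\dd}$, then $1 - p$ is a nonzero projection in $B\dd$, so a vector state on its range in the universal representation of $B$ restricts to a state of $B$ annihilating every $p_\lambda$, contradicting $(1)$.

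The main obstacle is the interchange between arbitrary lattice suprema of projections in $B\dd$ and normal-state evaluation, i.e.\ showing that $\tilde\rho(p_\lambda) = 0$ for every $\lambda$ forces $\tilde\rho(\sup_\lambda p_\lambda) = 0$. Once that is in hand via the Cauchy--Schwarz / support-projection argument, the rest of the equivalences are a formal consequence of \cref{lem:supremum.formula} and the standard state / normal-state correspondence.
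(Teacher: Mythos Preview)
Your argument is correct, but it diverges from the paper's route in the $(1)$-part. The paper proves $(1)\Leftrightarrow(2)$ by citing Pedersen's result that every hereditary subalgebra is an intersection of hereditary kernels of states \cite[1.5.2 and 3.13.5]{pedersen.book}: strict positivity of $\{p_\lambda\}$ means no state kernel contains all $p_\lambda$, which is exactly the statement that the hereditary subalgebra they generate is not contained in any proper hereditary kernel, hence equals $B$. You instead prove $(1)\Leftrightarrow(3)$ directly via the support-projection / Cauchy--Schwarz mechanism in $B\dd$, bypassing the Pedersen citation entirely. Your approach is more self-contained and makes explicit the lattice fact that $\{q:\tilde\rho(q)=0\}$ is closed under arbitrary suprema; the paper's approach is shorter on the page but leans on a structural result about hereditary subalgebras. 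For $(2)\Leftrightarrow(3)$ the two arguments are essentially the same, though you spell out the approximate-unit limit where the paper just invokes the uniqueness of the open projection attached to a hereditary subalgebra.
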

\begin{proof} The equivalence of (1) and (2) follows from the fact that every hereditary subalgebra is an intersection of hereditary kernels of states \cite[1.5.2 and 3.13.5]{pedersen.book}. That (2) is equivalent to (3) follows from Lemma \ref{lem:supremum.formula} together with the observation that the open projection associated with a hereditary subalgebra $H$ is equal to $1_{B\dd}$ exactly  when $H$ is all of $B.$\end{proof}
If we restrict to the case of a countable set of orthogonal projections, the supremum of this orthogonal set in the projection lattice of the double dual coincides with the ultraweakly convergent sum of the projections in the double dual, by \cite[2.105]{AlfsenSchultz}, and so we have the interesting corollary that:
\begin{corollary}[\bf Criterion for strict positivity] Let  $\{p_i\}$ denote a countable set of pairwise orthogonal  projections in a given \Cstar-algebra $B.$
 The set of projections is strictly positive in the sense of  Definition \ref{def:strictly.positive} if and only if  $\,\fsum{p_i}  = 1_{B\dd}$ ultraweakly.\label{cor:when.is.a.sum.strictly.positive}\end{corollary}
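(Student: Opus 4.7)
The plan is to deduce this corollary almost immediately from \cref{cor:three.equivalences.for.strict.positivity} together with the cited fact from \cite{AlfsenSchultz} that the supremum of a countable pairwise orthogonal family of projections in a von Neumann algebra agrees with the ultraweakly convergent sum of the family.

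Concretely, I would apply \cref{cor:three.equivalences.for.strict.positivity} with $\{p_\lambda\} = \{p_i\}$ to reduce the claim to the assertion that strict positivity of $\{p_i\}$ is equivalent to $\sup_i p_i = 1_{B^{\star\star}}$, where the supremum is taken in the projection lattice of $B^{\star\star}$. Since the $p_i$ are pairwise orthogonal and the index set is countable, the partial sums $\fsum{p_i}$ form an increasing net of projections in $B^{\star\star}$. By \cite[2.105]{AlfsenSchultz}, this net converges ultraweakly to $\sup_i p_i$. Hence $\sup_i p_i = 1_{B^{\star\star}}$ if and only if $\fsum{p_i} = 1_{B^{\star\star}}$ ultraweakly, and combining this with the equivalence (1)$\Leftrightarrow$(3) from the preceding corollary finishes the proof.

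There is essentially no obstacle here; the only thing to check carefully is that the identification between the lattice-theoretic supremum and the ultraweak sum is valid in the double dual $B^{\star\star}$ (which is a von Neumann algebra, so the cited result applies directly), and that the countability hypothesis is used only to guarantee that the partial sums form a sequence (so that ``ultraweakly convergent sum'' makes sense in the usual way as a limit of partial sums). No further argument beyond invoking the previous corollary and the \cite{AlfsenSchultz} result is needed.
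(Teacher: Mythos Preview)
Your proposal is correct and matches the paper's own argument essentially verbatim: the paper also derives the corollary by combining the equivalence (1)$\Leftrightarrow$(3) of \cref{cor:three.equivalences.for.strict.positivity} with the fact from \cite[2.105]{AlfsenSchultz} that for a countable orthogonal family the lattice supremum in $B\dd$ coincides with the ultraweakly convergent sum. There is nothing to add.
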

 One might wonder if  norm convergence could not be used in the above corollary, but infinite sums of projections will not converge in norm. This leaves as options either a strict topology or a weak topology. Certainly, a natural setting for taking infinite sums of projections is the enveloping algebra, \textit{i.e.} the bi-dual.
 \begin{example} Consider $\L{\inf} (S^1,\mu)$ where $\mu$ is Haar measure. It is shown in  \cite[3.1.(2)]{DecomposabilityofvonNeumannalgebras} that there exist projections summing to 1 in the projection lattice of this \Wstar-algebra, and therefore the hereditary subalgebra generated by these projections, being the same as the norm-closed ideal generated, is all of $\L{\inf}(S^1,\mu)$. But evidently these projections do not sum, either in norm or strictly, to 1 there. We can give an explicit description of one such family of projections: consider the indicator functions $p_n$ of the closed intervals $[2^{-n} \pi,  2^{-n+1}\pi]_{n=0}^\inf.$ These functions define projections in the algebra and their pairwise products are functions supported on sets of measure zero. So these are orthogonal projections, and to check ultraweak convergence of the partial sums  to 1 means checking convergence under states given by integrating against Haar measure with respect to functions in $\L{1} (S^1,\mu).$
 \end{example}
Indeed, the above example generalizes to \Wstar-algebras with separable predual; in other words, $\Wstar$-algebras that are representable on separable Hilbert spaces, see \cite[p.69]{pedersen.book} :
\begin{lemma} Let $B$ denote a \Wstar-algebra that is representable on a separable Hilbert space.
Then there exists a countable family of orthogonal projections in $B$ that sum to 1 ultraweakly.
\label{lem:countablefamily}
\end{lemma}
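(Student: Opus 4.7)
The plan is to use a straightforward maximality argument combined with the separability hypothesis and then invoke \cref{cor:when.is.a.sum.strictly.positive}. First I would fix a faithful normal representation $B\subset B(\Hilbert)$ with $\Hilbert$ separable; this is available by the equivalence between having a separable predual and being representable on a separable Hilbert space (the reference on p.~69 of \cite{pedersen.book} cited just before the lemma). Working inside $B(\Hilbert)$ is convenient because pairwise orthogonality of projections $p_i\in B$ coincides with orthogonality of the corresponding closed subspaces $p_i\Hilbert$.

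Next, I would apply Zorn's lemma to the set of families of pairwise orthogonal nonzero projections in $B$, ordered by inclusion, to produce a maximal such family $\{p_i\}_{i\in I}$. Countability of the index set is immediate from separability of $\Hilbert$: choosing a unit vector in each $p_i\Hilbert$ yields a pairwise orthonormal subset of a separable Hilbert space, hence at most countable. So we may index the family by the positive integers.

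It remains to check that $\fsum{p_i} = 1_{B\dd}$ ultraweakly, and for this I would use \cref{cor:when.is.a.sum.strictly.positive}. By that corollary, this ultraweak sum equals $1_{B\dd}$ exactly when the hereditary subalgebra generated by $\{p_i\}$ is all of $B$, which (by \cref{cor:three.equivalences.for.strict.positivity}) is equivalent to $\sup_i p_i = 1_{B\dd}$. But if $q:=\sup_i p_i$ were strictly less than $1_{B\dd}$, then $1-q$ would be a nonzero projection in $B\dd$ orthogonal to each $p_i$. Since $B$ is a \Wstar-algebra the supremum is already attained in $B$, so $1-q\in B$, and adjoining $1-q$ to the family contradicts maximality. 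Therefore $\sup_i p_i = 1_{B\dd}$, and \cref{cor:when.is.a.sum.strictly.positive} yields the desired ultraweak convergence.

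The main (mild) obstacle is the justification that the supremum of a set of projections in a \Wstar-algebra is attained in the algebra itself, rather than merely in $B\dd$; this is standard but needs the fact that $B$ is already a von Neumann algebra in the chosen faithful normal representation, so that $1-q\in B$ really gives a new projection contradicting maximality. Everything else reduces to combining Zorn's lemma with the two corollaries already established in this section.
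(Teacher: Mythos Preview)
Your maximality argument and the countability-from-separability step are exactly what the paper does. The gap is in the last paragraph, where you route the convergence through \cref{cor:three.equivalences.for.strict.positivity} and \cref{cor:when.is.a.sum.strictly.positive}. Those corollaries concern the supremum and the ultraweak sum \emph{in the bidual} $B\dd$, whereas for a \Wstar-algebra $B$ the canonical embedding $B\hookrightarrow B\dd$ is not normal, so the supremum of $\{p_i\}$ computed in $B\dd$ need not coincide with the one computed in $B$ and need not lie in $B$. Concretely, take $B=\ell^\infty$ with the minimal projections $e_n$: this is a maximal orthogonal family, its supremum in $B$ is $1$, yet its supremum in $B\dd$ is strictly below $1_{B\dd}$ (any Banach limit in $(\ell^\infty)^*$ vanishes on every partial sum but equals $1$ on the identity). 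Thus the sentence ``since $B$ is a \Wstar-algebra the supremum is already attained in $B$, so $1-q\in B$'' conflates two different suprema, and in fact the statement $\fsum{p_i}=1_{B\dd}$ in the $\sigma(B\dd,B^{*})$ topology is false in this example. In other words, the lemma is about the ultraweak topology $\sigma(B,B_*)$ of the \Wstar-algebra $B$ itself, not about $\sigma(B\dd,B^{*})$.

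The fix is to drop the detour through $B\dd$: since $B$ is already a \Wstar-algebra, the increasing partial sums converge in $\sigma(B,B_*)$ to a projection $P\in B$ (this is \cite[2.105]{AlfsenSchultz}, the reference the paper uses), and then your maximality argument with $1-P$ finishes the proof directly. That is exactly the paper's route; your Zorn and separability steps already match it.
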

\begin{proof} Let $\{p_\lambda\}$ be a maximal family of orthogonal projections. It is not possible to represent uncountably many orthogonal projections on a separable Hilbert space \cite[p.5]{Dixmier1957}, so this family is necessarily countable. On the other hand, the increasing net of partial sums $\fsum p_\lambda$ converges ultraweakly \cite[2.105]{AlfsenSchultz} to some projection, $P,$ in  $B.$ If this projection were not equal to 1, then we could have added $1-P$ to the family of orthogonal projections, but this would contradict maximality.
\end{proof}
Ultraweak convergence, when available, can be brought to bear as follows:
\begin{lemma}Let $A$ be a unital C*-algebra. Given $\Oinf\subset A$, the basis problem can be solved if the diagonal projections $\vi\vi\star$ sum ultraweakly to $1_A.$\label{prop:strictlypositive} \end{lemma}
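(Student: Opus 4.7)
The plan is to approximate every $x \in A$ by its compressions $P_n x P_n$, where $P_n := \sum_{i=1}^{n} p_i$, and to use the ultraweak summability hypothesis to force $x = 0$. Because the $p_i$ are pairwise orthogonal diagonal projections of a Cuntz family, each $P_n$ is itself a projection in $A$. By hypothesis $P_n \to 1_A$ ultraweakly in $A\dd$, and since $(P_n)$ is a monotone net of projections this convergence automatically upgrades to $\sigma$-strong: for any normal positive functional $\varphi$ on $A\dd$,
\[
\varphi\bigl((1-P_n)^2\bigr) \;=\; \varphi(1-P_n) \;\to\; 0.
\]

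Now suppose $x \in A$ satisfies $p_i x p_j = 0$ for all $i,j$. Then $P_n x P_n = \sum_{i,j=1}^{n} p_i x p_j = 0$ for every $n$. I would split the error as
\[
x - P_n x P_n \;=\; (1 - P_n)\,x \;+\; P_n\, x\,(1 - P_n),
\]
and verify that each summand tends to $0$ in the $\sigma$-strong topology. For the first summand, the map $y \mapsto \varphi(x\star y\, x)$ is a normal positive functional, so $\varphi(x\star(1-P_n)x) \to 0$, which is exactly the statement that $(1-P_n)x \to 0$ $\sigma$-strongly. For the second summand, the operator inequality $(1-P_n)\,x\star P_n x\,(1-P_n) \leq \|x\|^2 (1-P_n)$ gives $\varphi\bigl((1-P_n)\,x\star P_n x\,(1-P_n)\bigr) \leq \|x\|^2\,\varphi(1-P_n) \to 0$. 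Since $\sigma$-strong convergence implies ultraweak convergence, we conclude $x = 0$ in $A\dd$, hence in $A$.

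The main subtlety is that multiplication in $A\dd$ is only separately, not jointly, ultraweakly continuous, so one cannot directly pass to ultraweak limits inside the triple product $P_n x P_n$. I sidestep this in two steps: (i) promote the convergence $P_n \to 1$ from ultraweak to $\sigma$-strong by exploiting the monotonicity of the projections $(P_n)$, and (ii) split the difference $x - P_n x P_n$ so that only a single "small" factor $(1-P_n)$ appears in each term, letting me handle each by separate continuity together with the uniform bound $\|P_n\| \leq 1$.
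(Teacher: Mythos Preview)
Your proof is correct, but it works harder than necessary. The paper's argument simply iterates separate ultraweak continuity: from $p_i x p_j = 0$ one fixes $j$, sums over $i$, and uses that right multiplication by $x p_j$ is ultraweakly continuous to obtain $1\cdot x p_j = x p_j = 0$; then one sums over $j$ and uses that left multiplication by $x$ is ultraweakly continuous to obtain $x\cdot 1 = x = 0$. Your concern about joint continuity is legitimate for the diagonal limit $P_n x P_n \to x$, but the paper sidesteps it entirely by never taking both limits at once. Your route---promoting $P_n\to 1$ to $\sigma$-strong convergence via monotonicity and then splitting $x - P_n x P_n$---is a sound and reusable technique (it would handle, say, a joint limit over a doubly-indexed net where iteration is not available), but here it is overkill: the double sum factors, so two applications of separate continuity suffice.
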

\begin{proof} We are given that the sum $\fsum \vi\vi\star,$ converges ultraweakly to $1,$ in $A\dd.$ We note that the elements $\vi\vi\star$ belong to $A,$ as does the limit $1.$ To show that we can separate points means that we are to show that given an element $x\in A$ such that $\vi\vi\star x \vj\vj\star=0,$ then $x=0.$ But this is clear, because multiplication is  \cite[lemma 2.66]{AlfsenSchultz} ultraweakly continuous in each variable (separately).
Thus, summing over finite subsets with respect to $i$ and  taking ultraweak limits,  $x\vj\vj\star=0$ in $A\dd.$ Then, summing  over $j$ and taking ultraweak limits, $x=0.$ Thus $x$ is zero, as was to be shown. \end{proof}
The following corollary connects strict positivity with the property of separating points, showing that if the set $\{p_i\}$ is strictly positive, then the set of projections $\{p_i\}\subset B$ separates the points of $B.$ Generally, strict positivity is a sufficient but not necessary condition.
\begin{proposition} The basis problem can be solved for $\Oinf\subset B$ if
the subset $\{\vi\vi\star\}$ is strictly positive in $B,$ where $\vi$ denotes the generators of \Oinf.
\label{prop:can.solve.if.sp} \end{proposition}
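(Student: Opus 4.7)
The plan is to reduce this proposition to the combination of two results already established: the criterion for strict positivity in \cref{cor:when.is.a.sum.strictly.positive} and the ultraweak-convergence version of the basis problem in \cref{prop:strictlypositive}.

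First I would record the one algebraic observation needed: in an infinite Cuntz family the diagonal projections $p_i := v_i v_i\star$ are pairwise orthogonal. This is immediate from the defining inequality, since $p_i + p_j \leq \mathbf{1}_A$ for two projections in a \Cstar-algebra forces $p_i p_j = 0$; iterating, the whole countable family $\{p_i\}_{i=1}^{\inf}$ is pairwise orthogonal. Thus the hypothesis of \cref{cor:when.is.a.sum.strictly.positive} is met.

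Next, I would apply \cref{cor:when.is.a.sum.strictly.positive} to the countable pairwise orthogonal family $\{p_i\}\subset B$. Strict positivity of the set $\{p_i\}$, which is our standing hypothesis, is by that corollary equivalent to the statement that $\fsum p_i = 1_{B\dd}$ ultraweakly. Since $B$ is unital, $1_{B\dd} = 1_B$, so the ultraweak sum $\fsum v_i v_i\star$ equals $1_B$ in $B\dd$.

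Finally I would invoke \cref{prop:strictlypositive}, whose hypothesis is precisely that the diagonal projections $v_i v_i\star$ sum ultraweakly to $1_B$; its conclusion is that the basis problem is solved for $\Oinf\subset B$. This yields the desired conclusion. There is no substantial obstacle here—the work has been done in the preceding two results; the role of this proposition is simply to package the hypothesis in the convenient state-theoretic language of strict positivity and to stress, as the preceding text does, that this condition is sufficient but generally not necessary.
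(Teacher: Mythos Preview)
Your proposal is correct and matches the paper's own proof exactly: the paper simply says ``Combine Corollary \ref{cor:when.is.a.sum.strictly.positive} and Lemma \ref{prop:strictlypositive}.'' Your version just spells out the two small checks (pairwise orthogonality of the $p_i$ and $1_{B\dd}=1_B$) that make the combination legitimate.
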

\begin{proof} Combine Corollary \ref{cor:when.is.a.sum.strictly.positive} and Lemma \ref{prop:strictlypositive}. \end{proof}

The separation of points problem often has good behaviour with respect to  quotients:
\begin{proposition}[Quotients]\label{prop:quotients} Suppose that $\Oinf\subset B$ is such that the set
$\{\vi\vi\star\}$ is strictly positive in the unital \Cstar-algebra \textbf{}$B,$ where $\vi$ denotes the generators of $\Oinf.$  Suppose that $I$ is a two-sided proper ideal of $B.$ If we replace $B$ by the quotient $B/I,$ then the image of \Oinf in the quotient will separate the points of $B/I.$ \end{proposition}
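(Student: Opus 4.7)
The plan is to reduce the statement to \cref{prop:can.solve.if.sp} by showing that the image of the diagonal projections $\{v_iv_i\star\}$ remains strictly positive after passing to the quotient. Let $\pi\colon B\to B/I$ denote the quotient map. The first step is to check that $\pi$ restricted to $\Oinf$ is still an (isometric) unital embedding: since $I$ is proper the unit of $B/I$ is nonzero, so the composition $\Oinf\hookrightarrow B\to B/I$ is a unital $*$-homomorphism from a simple \Csa, hence injective. In particular the images $\pi(v_i)$ form an infinite Cuntz family in $B/I$ and $\pi(v_i)\pi(v_i)\star=\pi(v_iv_i\star).$

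The heart of the argument is to verify strict positivity of $\{\pi(v_iv_i\star)\}$ in $B/I.$ Given any state $\rho$ on $B/I,$ the composition $\tilde\rho:=\rho\circ\pi$ is a state of $B.$ By hypothesis the set $\{v_iv_i\star\}$ is strictly positive in $B,$ so there exists some index $i$ with $\tilde\rho(v_iv_i\star)\neq 0,$ equivalently $\rho(\pi(v_i)\pi(v_i)\star)\neq 0.$ Thus $\{\pi(v_i)\pi(v_i)\star\}$ is strictly positive in $B/I.$

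Finally, applying \cref{prop:can.solve.if.sp} to $\Oinf\subset B/I$ (with the embedding given by $\pi$) shows that the image of $\Oinf$ separates points of $B/I,$ which is the desired conclusion.

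I expect no serious obstacle here: the only mild subtlety is confirming that the quotient preserves the infinite-Cuntz-family structure, which is immediate from simplicity of $\Oinf$ together with properness of $I.$ Everything else is an application of the state pullback $\rho\mapsto\rho\circ\pi$ and the previously established equivalence between strict positivity and the basis property.
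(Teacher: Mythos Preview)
Your argument is correct and follows essentially the same route as the paper: pull back states through the quotient map to see that strict positivity survives, then invoke \cref{prop:can.solve.if.sp} (equivalently \cref{prop:strictlypositive,cor:three.equivalences.for.strict.positivity}). Your explicit check that the image of $\Oinf$ in $B/I$ is still a faithful unital copy, using simplicity of $\Oinf$ and properness of $I$, is a detail the paper leaves implicit but is indeed the only point requiring comment.
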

\begin{proof} Strict positivity in $B$ implies strict positivity in any nontrivial  quotient $B/I.$ This is because a state of $B/I$ can be composed with the canonical quotient map to give a state of $B$, and thus is nonzero on  the given set. But then by \cref{prop:strictlypositive,cor:three.equivalences.for.strict.positivity} the image of the set of projections $\{\vi\vi\star\}$ separates the points of the quotient $B/I.$
\end{proof}
 \smallskip
\subsection{Semicontinuity applied to matrix decomposition}\label{sec:matrix.decomp}
Next we will use some facts about semicontinuity in \Cstar-algebras.
   It is well known that for ordinary real functions, if a function and its negation are both (lower) semicontinuous, then the function is continuous. This fact generalizes nicely to  the noncommutative setting as follows.
\begin{lemma}[\!\!\protect{\cite[th. 3.12.9]{pedersen.book}}]
  Let $A$ be a  \Cstar-algebra. Then $\Mult(A)_{sa}=(U_{sa})^m \bigcap (U_{sa})_m$ where $(U_{sa})^m$ (respectively $(U_{sa})_m$) denotes the set of limits of increasing (resp. decreasing) nets of self-adjoint elements of $A.$
\label{th:semicontinuity}\end{lemma}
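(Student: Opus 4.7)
The plan is to work inside the enveloping \Wstar-algebra $A\dd,$ to view $(U_{sa})^m$ and $(U_{sa})_m$ as the ``lower'' and ``upper'' semicontinuous self-adjoint elements relative to $A,$ and to prove the two inclusions separately.

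For the forward inclusion $\Mult(A)_{sa} \subseteq (U_{sa})^m \cap (U_{sa})_m,$ I would first reduce to the positive case: any self-adjoint multiplier splits as $T = T_+ - T_-$ with $T_\pm\in \Mult(A)_+$ via continuous functional calculus in $\Mult(A),$ and the classes $(U_{sa})^m$ and $(U_{sa})_m$ are additively closed while being interchanged by negation. For $T\in \Mult(A)_+,$ fix an approximate unit $\{e_\lambda\}$ of $A$ and set $x_\lambda := T^{1/2} e_\lambda T^{1/2}.$ Since $T^{1/2}\in \Mult(A),$ each $x_\lambda$ lies in $A_+,$ monotonicity of $\{e_\lambda\}$ makes $\{x_\lambda\}$ monotone and bounded above by $T,$ and the weak-operator convergence $e_\lambda \to 1$ in $A\dd$ combined with separate weak-operator continuity of multiplication on bounded sets gives $x_\lambda\uparrow T.$ Hence $T\in (U_{sa})^m,$ and applying the same construction to a positive multiplier dominating $T$ (for example $\|T\|\cdot 1 - T$ interpreted in the unitization) yields the decreasing approximation and places $T\in (U_{sa})_m$ as well.

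For the converse inclusion $(U_{sa})^m \cap (U_{sa})_m \subseteq \Mult(A)_{sa},$ suppose $x = \sup_\lambda x_\lambda = \inf_\mu y_\mu$ with $x_\lambda, y_\mu \in A_{sa}.$ To conclude $x\in \Mult(A),$ by polarization it suffices to show $a^{1/2} x a^{1/2}\in A$ for every $a\in A_+.$ The sandwiched differences $d_{\lambda,\mu} := a^{1/2}(y_\mu - x_\lambda) a^{1/2}$ lie in $A_+,$ and their infimum in $A\dd$ vanishes, so they converge to zero in the strong operator topology. The crux is to upgrade this to norm convergence, after which $a^{1/2}xa^{1/2}$ is a norm limit of elements $a^{1/2}x_\lambda a^{1/2}\in A$ and hence lies in $A.$

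I expect this norm-convergence step to be the main obstacle, since a monotone net in a \Cstar-algebra tending to zero strongly need not tend to zero in norm. The standard fix is to identify $(U_{sa})^m$ with the lower semicontinuous elements of $A\dd$ in the sense of Akemann and Pedersen, then use a spectral argument on $a$ to control $y_\mu - x_\lambda$ on the effective support of $a.$ Additional care is needed in the non-unital case, where translation by constant multiples of the unit has to be replaced either by manipulations in the unitization of $A$ or by the use of an approximate unit.
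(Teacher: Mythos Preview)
The paper does not supply a proof of this lemma: it is quoted as Theorem~3.12.9 of Pedersen's book and then used as a black box in the proof of the continuity theorem that follows. There is therefore no in-paper argument to compare your proposal against.

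On the merits of your sketch, two points deserve comment. First, in the reverse inclusion your reduction ``by polarization it suffices to show $a^{1/2} x a^{1/2}\in A$'' only yields $AxA\subseteq A$, which characterises the quasi-multiplier space $QM(A)$, and in general $QM(A)\supsetneq\Mult(A)$. What is actually required is the one-sided condition $xa\in A$. Fortunately the same norm-convergence input you isolate would deliver this directly: from $0\le x-x_\lambda\le y_\mu-x_\lambda$ one obtains
\[
\|(x-x_\lambda)a\|^2 \le \|x-x_\lambda\|\,\bigl\|a^*(y_\mu-x_\lambda)a\bigr\|,
\]
so if the sandwiched differences tend to zero in norm then $x_\lambda a\to xa$ in norm and $xa\in A$. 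The defect is therefore in the framing rather than fatal, but the crux you correctly identify --- upgrading strong convergence of $a^*(y_\mu-x_\lambda)a$ to norm convergence --- is genuinely the hard step, and the ``spectral argument on $a$'' you gesture at is not a proof; Pedersen's own argument here is not short. Second, as you suspect, the non-unital case forces the unitization: the paper's phrasing with nets from $A_{sa}$ is literally false for non-unital $A$, since $1_{\Mult(A)}$ cannot be the infimum of a decreasing net in $A_{sa}$ (no element of $A_{sa}$ dominates $1$). Pedersen's actual statement uses $\tilde A_{sa}$.
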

The significance of the above lemma is that certain sums that can be defined in the double dual in fact define elements of the \Cstar-algebra or its multipliers.
In the presence of slightly more than a solution to the basis problem, we obtain the following theorem:

\begin{theorem}[\bf Continuity] Let $c_i$ denote elements of the unit ball of a unital  \Cstar-algebra $\corona,$ and let $p_i$ denote  projections  summing sequentially and ultraweakly to 1. Then
the sum $T:= \sum_1^\infty   p_i c_i p_i$ defines an element of  $\corona.$ \label{prop:inf.sum.for.T}\label{th:inf.sum.for.T}
\end{theorem}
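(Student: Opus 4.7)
The plan is to apply the semicontinuity lemma (\cref{th:semicontinuity}) to $\corona$ itself. Since $\corona$ is unital, $\Mult(\corona)=\corona$, so the lemma reduces to the assertion $\corona_{sa}=(\corona_{sa})^{m}\cap(\corona_{sa})_{m}$ inside $\corona\dd$. My task is therefore to exhibit $T$ both as an ultraweak limit of an increasing net of self-adjoint elements of $\corona$ and as an ultraweak limit of a decreasing such net.

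First I would decompose each $c_i$ into four positive elements of the unit ball, reducing by linearity to the case $0\le c_i\le 1$. A preliminary observation is that the $p_i$ are necessarily pairwise orthogonal: compressing the ultraweak identity $\sum_i p_i = 1$ by $p_j$ on both sides, which is legal by separate ultraweak continuity of multiplication \cite[Lemma 2.66]{AlfsenSchultz}, yields $p_j=\sum_i p_j p_i p_j$. Since the $i=j$ summand alone equals $p_j$ and every summand is positive, the remaining terms must vanish, giving $p_i p_j=0$ for $i\ne j$.

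With orthogonality in hand and $0\le c_i\le 1$, the partial sums $T_N:=\sum_{i=1}^{N}p_i c_i p_i$ lie in $\corona_{sa}$, increase with $N$, and are bounded above by $\sum_{i=1}^N p_i\le 1$; so they have an ultraweak limit $T\in(\corona\dd)_{sa}$, exhibiting $T\in(\corona_{sa})^{m}$. Applying the same analysis to $d_i:=1-c_i\in[0,1]$ gives an increasing bounded sequence $\sum_{i=1}^N p_i d_i p_i = \sum_{i=1}^N p_i - T_N$ with ultraweak limit $1-T$, and hence $T$ is the ultraweak limit of the decreasing sequence $1-\sum_{i=1}^N p_i d_i p_i$ of elements of $\corona_{sa}$, placing $T$ in $(\corona_{sa})_{m}$ as well.

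Combining these, the semicontinuity lemma delivers $T\in\corona_{sa}$, and recombining the four positive components yields the desired element of $\corona$. The step most requiring care is the preliminary orthogonality observation, together with verifying that the two one-sided limit properties really do hold at the level of $\corona\dd$; once both are established, the semicontinuity lemma does the essential work of upgrading an element of the bidual to a genuine element of $\corona$.
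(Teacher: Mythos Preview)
Your proof is correct and follows essentially the same approach as the paper: reduce to positive $c_i$, recognize both $T$ and $1-T$ as increasing ultraweak limits of self-adjoint elements of $\corona$, and invoke the semicontinuity lemma (\cref{th:semicontinuity}) for the unital algebra $\corona=\Mult(\corona)$. Your explicit verification that the $p_i$ must be pairwise orthogonal is a welcome addition, since the paper uses this fact (to get monotone partial sums and the identity $1-T=\sum p_i(1-c_i)p_i$) without spelling it out.
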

\begin{proof} Consider the sum, as above:
\begin{equation}\label{eq:sum.in.corona}
  T:= \sum_1^\infty   p_i  c_i p_i
\end{equation}
where the $c_i$ are arbitrary elements of the positive unit ball of a unital \Cstar-algebra $\corona,$ and the sum converges in the double dual $\corona\dd.$
We note that the sum is monotone increasing, so that $T$ is a (lower) semi-continuous element of the double dual.
Replacing $c_i$ by $1-c_i$ we have
\begin{equation}\label{eq:sum.in.corona.complements}
  1-T = \sum_1^\infty   p_i  (1-c_i) p_i
\end{equation}
which shows that $1-T$ is also given by a monotone increasing sum. But then both $T$ and $-T$ are lower semicontinuous. Applying Lemma \ref{th:semicontinuity}  shows that the element $T$ defined by the sum is actually in the algebra $\corona,$ as was to be shown. This proof assumed that the $c_i$ were positive, but we can drop the restriction to positive elements by using the fact that a general element of a \Cstar algebra can be written as a linear combination of four positive elements.
\end{proof}
The above theorem provides a conditional expectation from a \Cstar-algebra $\corona$ onto the diagonal subalgebra (of diagonal elements) in $\corona.$ 
Additional properties of these elements will be discussed later, in section \ref{sect:extensions}.
We remark that a similar proof implies that $N$--diagonal infinite matrices also define elements of the \Cstar-algebra:
\begin{corollary} Let $c_{ij}$ denote elements of the unit ball of a unital \Cstar-algebra $\corona.$ Suppose that $c_{ij}=0$ if $\,|i-j|>N,$ for some fixed integer $N,$ and  let $(p_i)$ denote a countable set of orthogonal projections summing sequentially to 1 in the double dual. Then
the sum $T:= \sum_{i,j=1}^\infty   p_i  c_{ij} p_j$
 defines an element of the \Cstar-algebra \corona.
\end{corollary}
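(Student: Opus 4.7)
The plan is to decompose the doubly-indexed sum into finitely many single-index sums and reduce each one to \cref{th:inf.sum.for.T}. Writing
\[
T \;=\; \sum_{k=-N}^{N} d_k, \qquad d_k := \sum_{i=1}^{\infty} p_i\, c_{i,i+k}\, p_{i+k},
\]
the case $k=0$ is literally \cref{th:inf.sum.for.T}. For $k\neq 0$ the obstruction is that $d_k$ is not self-adjoint, so the lower/upper semicontinuity trick behind \cref{th:inf.sum.for.T} does not apply directly. I would instead show that each self-adjoint combination $d_k+d_k^*$ lies in $\corona$; the same argument applied after replacing $c_{i,i+k}$ by $i c_{i,i+k}$ places $i(d_k-d_k^*)$ in $\corona$, and $d_k$ is then recovered by a linear combination.

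Setting $a_i := p_i c_{i,i+k} p_{i+k}$ and $s_i := a_i + a_i^*$, the relation $p_i p_{i+k}=0$ forces $a_i^2 = 0 = (a_i^*)^2$, leaving $s_i^2 = a_i a_i^* + a_i^* a_i$, whose two terms live in the orthogonal corners $p_i\corona p_i$ and $p_{i+k}\corona p_{i+k}$; hence $\|s_i\|\leq \|c_{i,i+k}\|\leq 1$. So each $s_i$ is a self-adjoint element of the corner cut by $E_i:=p_i+p_{i+k}$, and $\tfrac12(s_i+E_i)$ is a positive contraction supported on that corner.

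The main technical obstacle is that consecutive $E_i$'s overlap (for example, $E_i$ and $E_{i+k}$ share $p_{i+k}$), so \cref{th:inf.sum.for.T} cannot be applied to them in bulk. To get around this I would partition $\mathbb{N}$ into the $|k|+1$ residue classes $S_r$ modulo $|k|+1$: within a single class, any two distinct $i,i'$ satisfy $|i-i'|\geq |k|+1$, which forces the pairs $\{i,i+k\}$ and $\{i',i'+k\}$ to be disjoint, so $\{E_i\}_{i\in S_r}$ is a family of mutually orthogonal projections in $\corona$. Augmenting with the leftover singletons $\{p_j : j\notin S_r\cup(S_r+k)\}$ produces an orthogonal family in $\corona$ that sums sequentially and ultraweakly to $1$. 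Applying \cref{th:inf.sum.for.T} to this family first with coefficients $\tfrac12(s_i+E_i)$ on each $E_i$ (and $0$ on the leftovers), and then with coefficients $E_i$ on each $E_i$ (and $0$ on the leftovers), places both $\sum_{i\in S_r}\tfrac12(s_i+E_i)$ and $\sum_{i\in S_r} E_i$ in $\corona$; subtracting yields $\sum_{i\in S_r} s_i\in\corona$. Summing over the finitely many residue classes gives $d_k+d_k^*\in\corona$, and the finite sum $\sum_{k=-N}^{N} d_k = T$ then lies in $\corona$.
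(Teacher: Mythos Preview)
Your argument is correct. The paper does not actually supply a proof of this corollary; it merely remarks that ``a similar proof'' to that of \cref{th:inf.sum.for.T} works, so there is nothing to compare against except that one-line hint. Your decomposition into the $2N+1$ diagonals $d_k$, followed by the residue-class trick to manufacture genuinely orthogonal families $\{E_i\}_{i\in S_r}$ to which \cref{th:inf.sum.for.T} applies, is a clean and fully rigorous way to carry out what the paper leaves implicit.

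Two small remarks. First, you use the symbol $i$ both for the summation index and for $\sqrt{-1}$ in the phrase ``replacing $c_{i,i+k}$ by $i\,c_{i,i+k}$''; this is clear in context but would read better with a different symbol for the imaginary unit. Second, for $k<0$ the sum defining $d_k$ is implicitly over $i\geq 1-k$, and the set $S_r+k$ should be intersected with the positive integers; your orthogonality and covering checks go through unchanged once this bookkeeping is made explicit. Neither point affects the validity of the argument.

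It is also worth noting that the ultraweak convergence of each $d_k$ in $\corona\dd$, which you use when writing $T=\sum_k d_k$, is not assumed but actually \emph{produced} by your argument: once $\sum_{i\in S_r}\tfrac12(s_i+E_i)$ and $\sum_{i\in S_r}E_i$ are shown to lie in $\corona$ via \cref{th:inf.sum.for.T}, their difference gives the ultraweak limit of the partial sums of $\sum_i s_i$, and linear combinations then define $d_k$ itself. So there is no hidden circularity.
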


\section{Separating points and the Cuntz-Krieger property for $\Oinf\subset\Bh$}\label{sect:Bh}

As we will now show, the problem of separating points  reduces to known quantities in the case of $\Oinf\subset \Bh.$

A copy of \Bh containing a unital copy of \Oinf is equivalent to a representation of the \Cstar-algebra \Oinf on the given Hilbert space \Hilbert. One of the fundamental properties of representations of \Oinf is that they may or may not be essential. We  define a representation of \Oinf to be {\it essential} if the projections $\vi\vi\star$ sum sequentially and strongly to 1 in the representation.
It seems that this  property was first  observed, but not named, by Cuntz and Krieger in \cite[Remark 2.15]{CuntzKrieger1980}, and the term essential might be due to Arveson\footnote{The term essential is also used for representations whose range does not  intersect the compact operators. This is not what we mean here.}. His monograph \cite{Arveson1989}  gives  more information on the property of being essential; see also the pioneering papers of J\o rgensen and co-workers, \textit{e.g.} \cite{Jorgensen}, where this property is shown to have important applications. In the case of \Bh we have the following characterization of the separation of points property:
\begin{theorem} The basis problem for $\Oinf\subset \Bh$ can be solved if and only if, viewing the given inclusion map as a representation, the map is essential. \label{th:case.of.Bh}\end{theorem}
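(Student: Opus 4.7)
The plan is to handle both implications directly using the von Neumann algebra structure of $\Bh$. The Cuntz inequality $v_iv_i^{*}+v_jv_j^{*}\leq 1$ for $i\neq j$ forces $p_ip_j=0$ when $i\neq j$, so the range projections $p_i:=v_iv_i^{*}$ are pairwise orthogonal. Consequently the partial sums $P_n:=\sum_{i=1}^n p_i$ form a bounded monotone increasing sequence of projections in $\Bh$, and their strong-operator limit $P:=\sup_n P_n$ is a projection in $\Bh$. By definition the representation is essential precisely when $P=1$.

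For the ``if'' direction I would argue directly in the strong operator topology. Supposing $P=1$ and $x\in \Bh$ with $p_i x p_j = 0$ for all $i,j$, one has $P_m x P_n = 0$ for every $m,n$, and for any $\xi,\eta\in \Hilbert$ the strong convergence $P_n\to 1$ together with boundedness of $x$ yields
\[
  \langle x\eta,\xi\rangle \;=\; \lim_{m,n\to\infty}\langle xP_n\eta, P_m\xi\rangle \;=\; 0,
\]
so $x=0$. The converse is best argued contrapositively: if the representation is not essential, then $P\neq 1$ and $x:=1-P$ is a nonzero element of $\Bh$; since $p_i\leq P$ we have $p_iP = Pp_i = p_i$, and hence $p_i x p_j = p_ip_j - p_ip_j = 0$ for all $i,j$, furnishing a nonzero witness against separation.

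The main point to be careful about is resisting the temptation to deduce the forward direction from \cref{prop:can.solve.if.sp}, which would demand that $\{p_i\}$ be strictly positive in $\Bh$. Strict positivity is properly stronger than essentiality in this setting: a vector-state ultralimit along a free ultrafilter on $\mathbb{N}$, applied to a suitably chosen sequence of unit vectors in $\Hilbert$, produces a singular state vanishing on every $p_i$ while sending $1$ to $1$, even when the partial sums converge strongly to $1$. The direct strong-operator argument above bypasses this gap by staying inside $\Bh$ throughout and using only norm-continuity of the inner product.
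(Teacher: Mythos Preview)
Your argument is correct, and both directions line up with the paper's proof. For the converse you produce exactly the paper's counterexample $1-P$; for the forward direction the paper simply invokes \cref{prop:strictlypositive}, whose proof is the separate-continuity-of-multiplication argument you have spelled out, only phrased in the ultraweak rather than the strong topology of $\Bh$.

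Your closing paragraph, however, adds something the paper does not make explicit and is worth highlighting. You are right that an essential representation need not render $\{p_i\}$ strictly positive in $\Bh$ in the sense of \cref{def:strictly.positive}: your ultrafilter-limit state (or any state factoring through the quotient by the ideal generated by the $p_i$'s partial sums) kills every $p_i$ while evaluating $1$ at the identity. By \cref{cor:when.is.a.sum.strictly.positive} this is the same as saying the partial sums do \emph{not} converge to $1$ in $\sigma(\Bh^{**},\Bh^{*})$, so the hypothesis of \cref{prop:strictlypositive}, read literally as convergence in the bidual, is not available either. The paper's appeal to that lemma is thus tacitly using the $\sigma$-weak topology of $\Bh$ as a von~Neumann algebra (where strong convergence of a bounded increasing sequence does give the needed limit, and multiplication is separately continuous), rather than the enveloping-algebra ultraweak topology featured elsewhere in \S\ref{sect:Some useful lemmas}. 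Your direct strong-operator argument sidesteps this ambiguity entirely and is the cleaner route here.
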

\begin{proof}  Let $p_i$ denote the diagonal projections in $\Oinf.$ If the representation is not essential, then $\sum p_i$ does not converge strongly to $1_\Bh.$
But then it converges to some projection $P$ in $\Bh,$ other than $1,$ and evidently $P$ acts as $1$ on each of the projections $p_i.$  But then $p_i (1-P) p_j=0,$ while $1-P\not=0,$ so that the basis problem has a counter-example. On the other hand, if the representation is essential, then we can apply Proposition \ref{prop:strictlypositive} to solve the basis problem. 
\end{proof}
Since every \Cstar-algebra is a sub-\Cstar-algebra of some $\Bh,$ the above case of \Bh is in a sense the most general case of the separation of points problem, and as we have seen, cannot always be solved affirmatively. Given an embedding of $\Oinf$ into $\Bh$, the above result gives a criterion for being able to solve the separation of points problem. It shows that there do exist embeddings $\Oinf\subset\Bh$ for which this problem can be solved affirmatively.
We complete the picture by giving an interesting geometrical  construction of an essential embedding of \Oinf into $\Bh.$
\begin{example}\label{ex:Linf.construction.in.multipliers} Given a separable Hilbert space $\Hilbert,$ Kadison shows \cite[Ex.5.1.6]{KR1} that there is a diffuse $\sigma$-finite masa $M\subset \Bh$ and that the unit $1\in M$ decomposes as $1_M = \displaystyle\bigvee p_i$ for some countable set of pairwise orthogonal projections $\{p_i\}\subset M.$ \footnote{In fact, $M\isom\L{\inf} ([0,1],\mu),$ where $\mu$ is Lebesgue measure.}
But then $\displaystyle\sum p_i =1_\Bh,$ sequentially and strongly in $\Bh.$
On the other hand, the masa is diffuse, so that $M\cap\Kh=\{0\},$ and hence none of the projections $\{p_i\}$ are compact in $\Bh.$ Murray and von Neumann showed that such projections are Murray-von Neumann equivalent in $\Bh,$ and are also equivalent to $1_\Bh.$ Thus these projections generate a copy of $\Oinf,$ and we have $\Oinf\subset\Bh,$ unitally and essentially.\end{example}

Strong convergence of a sequence of projections $p_i$ in $\Bh$ implies strict convergence of the projections $1\tensor p_i$ in $\Mbk.$ This is because strict convergence in the multiplier algebra is equivalent to the strong convergence of adjointable Hilbert module operators, in the appropriate Hilbert module sense, on $B\tensor\compacts.$  Multiplication is (separately) strictly continuous \cite[2.3.3]{WO}, so the proof of Lemma \ref{prop:strictlypositive} gives the following corollary:
\begin{corollary}  Let $\pi$ be an essential representation of $\Oinf.$ Then the basis problem can be solved for $\Oinf$ embedded in $\Mbk$ by the map $1\tensor \pi\colon \Oinf\to\Mbk.$  \label{cor:embed.essential}\end{corollary}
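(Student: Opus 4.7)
The plan is to mimic the proof of \cref{prop:strictlypositive}, replacing ultraweak convergence in $\Bh$ by strict convergence in $\Mbk$. By hypothesis $\pi$ is essential, so the projections $p_i := \pi(v_i v_i^*)$ sum sequentially and strongly to $1_{\Bh}$. The first task is to transfer this to strict convergence of the partial sums $\sum_{i=1}^{n}(1\otimes p_i)$ to $1_{\Mbk}$ in the multiplier algebra. As recorded in the remark immediately preceding the corollary, strict convergence in $\Mbk$ coincides with strong convergence of the adjointable action on the Hilbert $B$-module $B\otimes\compacts$. Concretely, one checks the defining conditions $(1\otimes p_n)a\to a$ and $a(1\otimes p_n)\to a$ in norm on elementary tensors $a = b\otimes k$ with $k\in\compacts(\Hilbert)$; since $p_n\to 1$ strongly on $\Hilbert$ and $k$ is compact, $p_n k\to k$ in norm (and symmetrically on the right), which gives the required strict convergence after the usual density argument.

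With strict convergence in hand, the rest of the argument proceeds exactly as in \cref{prop:strictlypositive}. Suppose $x\in\Mbk$ satisfies $(1\otimes p_i)\,x\,(1\otimes p_j)=0$ for all $i,j$. Fixing $j$, summing over a finite index set in the $i$-variable, and taking the strict limit using the separate strict continuity of multiplication \cite[2.3.3]{WO}, the left factor tends strictly to $1_{\Mbk}$, so $x(1\otimes p_j)=0$. Summing over $j$ and taking strict limits in the remaining variable then forces $x=0$, which is precisely the separation of points property for $1\tensor\pi$.

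The only content beyond the ultraweak proof of \cref{prop:strictlypositive} is the passage from strong convergence of the $p_i$ in $\Bh$ to strict convergence of the $1\otimes p_i$ in $\Mbk$; this is routine bookkeeping in the Hilbert-module picture, so I do not anticipate any substantive obstacle.
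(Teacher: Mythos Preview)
Your proposal is correct and follows essentially the same route as the paper: convert strong convergence of the $p_i$ in $\Bh$ to strict convergence of $1\otimes p_i$ in $\Mbk$ via the Hilbert-module description, then invoke separate strict continuity of multiplication \cite[2.3.3]{WO} and rerun the argument of \cref{prop:strictlypositive}. The only quibble is notational: midway through you write $(1\otimes p_n)a\to a$ where $p_n$ should denote the partial sum $\sum_{i\le n}\pi(v_iv_i^*)$ rather than the individual projection; the intent is clear and the argument is unaffected.
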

We remark that if  we define the multiplier algebra in yet a different way, as the idealizer of $A:=B\tensor\compacts$ in its double dual, then strict convergence is stronger (finer) than relative ultraweak convergence $\sigma(\Mult(A),A\predual).$
\section{On $\Oinf\subset\O{n}$}\label{sect:Oinf}

The case of $\Oinf\subset\O{n}$ has a more algebraic flavour.
In this section, we first show the existence of one natural embedding of $\Oinf$ into $\O{n}$  such that the diagonal projections of $\Oinf$ separate the points of $\O{n},$ and then give a more general result that applies to all unital embeddings.
Let $s_1,\ldots,s_n$ and $t_1,t_2,\ldots$ denote
 generators of $\O{n}$ and $\Oinf$, respectively.
 Define the embedding $f_{n,\infty}$
of $\Oinf$ into $\O{n}$ by
\begin{equation*}
\label{eqn:embedding}
f_{n,\infty}(t_{(n-1)k+i}):=s_n^k\,s_i\quad(k\geq 0,\ i=1,\ldots,n-1)
\end{equation*}
where we define $s_n^0:=\mathbf{1}$.
The existence of this natural embedding $f_{n,\infty}$ has been observed before, and our description follows \cite[Definition 1.4(ii)]{Kawamura2016}.

Now we are in a position to bring our Theorem \ref{th:case.of.Bh} to bear. To do this, we shall choose an essential representation of \Oinf and show that we can extend it to a representation of $\O{n}.$ It will then follow from Theorem \ref{th:case.of.Bh} that the basis problem can be solved for elements of \Bh in $\Oinf\subset\O{n} \subset\Bh,$ in which case the basis problem is solved for elements of $\O{n}.$

It remains to show that we can in fact extend a representation of $\Oinf$ to a representation of $\O{n}.$ In the classic theory of \Cstar-algebras, there is a standard process \cite[prop. 2.10.2]{Dixmier1964} for the extension of representations to a larger algebra, but it requires enlarging the given Hilbert space. In our more algebraic situation given by inclusions of Cuntz algebras, one can use constructions with generators to be able to extend representations of $\Oinf$ to representations of $\O{n}$ without enlarging the given Hilbert space. We recall the following known property of representations of Cuntz algebras.
Given a representation $(\Hilbert,\pi)$ of $\Oinf$, and an embedding $\Oinf\subset\O{n},$
we can define a representation  $(\Hilbert,\pi')$ of $\O{n}$ by
\begin{equation*}
\label{eqn:kawamuras}
\pi'(s_i)=
\left\{
\begin{array}{ll}
\pi(t_i)\quad (i=1,\ldots,n-1),&\\
\\
{\pi(I)-\sum_{j=1}^{\infty}\pi(t_{j}t_j^*)
+\sum_{j=1}^{\infty}\pi(t_{j+n-1}\,t_j^*)}&(i=n).
\end{array}
\right.
\end{equation*}
It can be verified that $\pi'(s_1),\ldots,\pi'(s_n)$ satisfy the Cuntz relations for $\O{n}$.

By definition, if $\pi=0$, then $\pi'=0$.
We summarize the construction in a lemma, for whose proof see \cite[Lemma 3.7]{KawamuraArxiv}.
\begin{lemma}
\label{lem.extension} Let $\Oinf$ be embedded by $f_{n,\infty}$ in $\O{n}.$
For any representation $\pi$ of $\Oinf$ there exists a representation $\pi'$ of $\O{n}$ that extends the original representation $\pi$ in the sense that $\pi'\circ f_{n,\infty}=\pi$.
\end{lemma}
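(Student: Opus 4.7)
The plan is to verify directly that the displayed formula defines a representation $\pi'$ of $\O{n}$ on the same Hilbert space $\Hilbert$ and that it extends $\pi$ in the required sense. Writing $P := \sum_{j=1}^{\infty} \pi(t_j t_j^*)$ and $V := \sum_{j=1}^{\infty} \pi(t_{j+n-1} t_j^*),$ so that $\pi'(s_n) = \pi(I) - P + V,$ the first task is to make sense of these sums as bounded operators on $\Hilbert.$

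First I would observe that the projections $\pi(t_j t_j^*)$ are pairwise orthogonal and dominated by $\pi(I),$ so their partial sums increase strongly to a projection $P \leq \pi(I).$ For $V,$ each summand $\pi(t_{j+n-1} t_j^*)$ is a partial isometry with initial projection $\pi(t_j t_j^*)$ and final projection $\pi(t_{j+n-1} t_{j+n-1}^*),$ and both these families of end-projections are pairwise orthogonal. A Pythagorean estimate then gives $\|\sum_{j=M+1}^{N} \pi(t_{j+n-1} t_j^*)\xi\|^2 = \sum_{j=M+1}^{N} \|\pi(t_j t_j^*)\xi\|^2,$ which tends to $0$ as $M,N\to\infty$ by the strong convergence already established for $P,$ so $V$ is well-defined as a bounded partial isometry satisfying $V = VP,$ $V^*V = P,$ $(I-P)V = 0 = V^*(I-P),$ and $VV^* = Q,$ where $Q := \sum_{j\geq 1} \pi(t_{j+n-1} t_{j+n-1}^*) = P - \sum_{i=1}^{n-1}\pi(t_i t_i^*).$

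Next I would verify the Cuntz relations for $\pi'(s_1),\ldots,\pi'(s_n).$ The identities $\pi'(s_i)^*\pi'(s_j) = \delta_{ij}\pi(I)$ for $i,j\leq n-1$ are inherited from $\pi.$ For $i\leq n-1$ and $j=n,$ the computations $\pi(t_i^*)P = \pi(t_i^*)$ (using $t_i^*t_k t_k^* = \delta_{ik}\,t_i^*$) and $\pi(t_i^*)V = 0$ (since $t_i^*t_{j+n-1} = 0$ whenever $i\leq n-1 < j+n-1$) combine to give $\pi'(s_i)^*\pi'(s_n) = 0.$ For $i=j=n,$ expanding $(\pi(I)-P+V^*)(\pi(I)-P+V)$ and using $(I-P)V = 0 = V^*(I-P)$ together with $V^*V = P$ yields $\pi'(s_n)^*\pi'(s_n) = (\pi(I)-P)+P = \pi(I).$ The analogous expansion for $\pi'(s_n)\pi'(s_n)^*$ gives $(\pi(I)-P)+Q,$ so that $\sum_{i=1}^{n}\pi'(s_i)\pi'(s_i)^* = \sum_{i=1}^{n-1}\pi(t_i t_i^*) + (\pi(I)-P) + Q = \pi(I),$ as required.

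Finally, to verify $\pi'\circ f_{n,\infty} = \pi,$ I would establish the key one-step identity $\pi'(s_n)\pi(t_m) = \pi(t_{m+n-1})$ for every $m\geq 1$: the range of $\pi(t_m)$ lies in $\pi(t_m t_m^*)\Hilbert\subseteq P\Hilbert,$ so $(\pi(I)-P)\pi(t_m) = 0,$ while $V\pi(t_m) = \sum_{j}\delta_{jm}\pi(t_{j+n-1}) = \pi(t_{m+n-1})$ from the Cuntz relations in $\Oinf.$ A straightforward induction on $k$ then yields $\pi'(s_n)^k\pi'(s_i) = \pi(t_{(n-1)k+i})$ for all $k\geq 0$ and $1\leq i\leq n-1,$ which is exactly the assertion $\pi'\circ f_{n,\infty} = \pi.$ The single delicate step in this whole argument is the convergence of $V$ in the case where $\pi$ is not essential; once $V$ has been secured as a bounded operator, everything else is a formal manipulation of the Cuntz relations together with the universal property of $\O{n}.$
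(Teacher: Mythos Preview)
Your proof is correct and follows exactly the approach the paper indicates: the paper displays the same formula for $\pi'(s_n)=\pi(I)-P+V$ immediately before the lemma, asserts without details that the Cuntz relations can be verified, and then refers the reader to \cite[Lemma~3.7]{KawamuraArxiv} for a proof. You have simply carried out that verification in full, including the convergence of $P$ and $V$, the relations $\pi'(s_i)^*\pi'(s_j)=\delta_{ij}\,\pi(I)$ and $\sum_{i=1}^n\pi'(s_i)\pi'(s_i)^*=\pi(I)$, and the inductive identity $\pi'(s_n)^k\pi(t_i)=\pi(t_{(n-1)k+i})$ that yields $\pi'\circ f_{n,\infty}=\pi$.
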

This shows that we can indeed extend a representation of the smaller algebra, $\Oinf,$ to a representation of the larger algebra, $\O{n},$ without changing the ambient Hilbert space. Applying this construction to an essential representation of $\Oinf,$ the extended representation then separates the points of the larger algebra $\O{n}$ by Theorem \ref{th:case.of.Bh}.
We thus have the main result of this section:
 \begin{corollary} The basis problem can be solved for $\Oinf \underset{f_{n,\infty}}{\subset} \O{n}.$ \end{corollary}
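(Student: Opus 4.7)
The plan is to assemble the three ingredients the section has already put on the table: the existence of an essential representation of $\Oinf$ (\cref{ex:Linf.construction.in.multipliers}), the extension construction of \cref{lem.extension}, and the characterization of the basis problem in $\Bh$ by essentiality (\cref{th:case.of.Bh}).

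First I would pick a concrete essential faithful representation $\pi\colon\Oinf\to\Bh$ on a separable Hilbert space $\Hilbert$; \cref{ex:Linf.construction.in.multipliers} supplies one. By \cref{lem.extension} there is a representation $\pi'\colon\O{n}\to\Bh$ on the same Hilbert space such that $\pi'\circ f_{n,\infty}=\pi$. Since $\pi\neq 0$ we have $\pi'\neq 0$, and because $\O{n}$ is simple, $\pi'$ is automatically faithful. Thus we obtain a chain of unital inclusions
\begin{equation*}
\Oinf\ \underset{f_{n,\infty}}{\subset}\ \O{n}\ \underset{\pi'}{\subset}\ \Bh,
\end{equation*}
in which the composite inclusion $\Oinf\subset\Bh$ is nothing other than the essential representation $\pi$.

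Now suppose $x\in\O{n}$ satisfies $p_i x p_j=0$ for all $i,j$, where $p_i=f_{n,\infty}(t_i)f_{n,\infty}(t_i)\star$. Applying $\pi'$ and using $\pi'(p_i)=\pi(t_i)\pi(t_i)\star$, the same relations hold for $\pi'(x)\in\Bh$ against the diagonal projections of the Cuntz family $\{\pi(t_i)\}$. Since $\pi$ is essential, \cref{th:case.of.Bh} says the basis problem is solved for $\Oinf\subset\Bh$, so $\pi'(x)=0$, and then faithfulness of $\pi'$ forces $x=0$.

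The only point that genuinely needs attention is making sure the extension step does not degrade essentiality: concretely, that the diagonal projections of $\Oinf$ inside $\O{n}$ still sum strongly to $1_\Hilbert$ in the extended representation. But this is immediate because $\pi'$ acts on the \emph{same} Hilbert space as $\pi$, with $\pi'(f_{n,\infty}(t_i)f_{n,\infty}(t_i)\star)=\pi(t_i)\pi(t_i)\star$, so the strong convergence that makes $\pi$ essential transfers verbatim. Thus the corollary reduces to a three-line concatenation of the results already established, with no new estimate to prove.
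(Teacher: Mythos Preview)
Your proposal is correct and follows essentially the same route as the paper: choose an essential representation of $\Oinf$, extend it to $\O{n}$ on the same Hilbert space via \cref{lem.extension}, and then invoke \cref{th:case.of.Bh}. You make explicit two points the paper leaves implicit---that $\pi'$ is faithful because $\O{n}$ is simple, and that essentiality is preserved because $\pi'\circ f_{n,\infty}=\pi$ on the same $\Hilbert$---but the argument is otherwise identical.
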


 The above corollary appears to depend on the use of a very specific embedding map denoted by $f_{n,\infty}.$ However, recall that purely infinite, simple, unital, and nuclear \Cstar-algebras have been classified by their $K$-theory groups, adjusted slightly to take into account the position of the unit. The classification is moreover sufficiently functorial to also classify maps such as the one above. Lettting $f_{n,\infty}$ and $f$ denote some two injective unital maps of  $\Oinf$ into $\O{n},$ and passing to $K$-theory, we have
 \begin{center}
 \begin{tikzpicture}[auto]
\node (Z)                                        {$\integers$};  
\node (Z_n_1) [above right = of Z]  {$\integers_n$};
\node (Z_n_2) [below right = of Z]  {$\integers_n$};

\path[->] (Z) edge      node             {$f_{n,\infty}$}   (Z_n_1) 
                     edge      node [swap] {$f$}                    (Z_n_2)
(Z_n_1) edge [densely dotted,semithick] node   {$\alpha$}           (Z_n_2);
\end{tikzpicture}\end{center}
where the dotted line denots the map $\alpha$ that makes this diagram commutative. The map $\alpha$ is necessarily some automorphism of $\integers_n,$ and the classification theory lets us lift it to some automorphism, also denoted $\alpha,$ of $\O{n}.$ The significance of this is that our special map $\Oinf \underset{f_{n,\infty}}{\subset} \O{n}$ can be made equivalent to any other unital inclusion map  $\Oinf \underset{f}{\subset} \O{n}$ by an automorphism of $\O{n}.$ If the basis problem can be solved for one, it can be solved for the other, so we find that our corollary above implies an apparently much more general case:
 \begin{corollary}[Inclusions of \O{n}] The basis problem can be solved for any unital inclusion $\Oinf {\subset} \O{n}$\label{cor:final.result.on.On.into.Oinf}\end{corollary}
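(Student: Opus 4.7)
The plan is to reduce an arbitrary unital inclusion $f\colon\Oinf\to\O{n}$ to the specific inclusion $f_{n,\infty}$ treated in the previous corollary, by producing an automorphism $\alpha$ of $\O{n}$ with $\alpha\circ f_{n,\infty}=f;$ once such an $\alpha$ is in hand, the basis property transfers through it by a formal manipulation.

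First, I would verify that $f$ and $f_{n,\infty}$ induce the same class in $\KK(\Oinf,\O{n}).$ Since $K_0(\Oinf)\isom\integers$ is generated by the unit class $[1_{\Oinf}]$ and $K_1(\Oinf)=0,$ every unital $*$-homomorphism $\Oinf\to\O{n}$ is determined on $K$-theory by the requirement $[1_{\Oinf}]\mapsto[1_{\O{n}}].$ Thus $K_*(f)=K_*(f_{n,\infty}),$ and as both $\Oinf$ and $\O{n}$ lie in the UCT class, this common $K$-theoretic datum lifts to a common class in $\KK(\Oinf,\O{n}).$

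Next, I would invoke the Kirchberg--Phillips classification of unital morphisms between Kirchberg algebras: two unital $*$-homomorphisms from $\Oinf$ to $\O{n}$ that agree in $\KK$-theory are related by an automorphism of the codomain. Applied to $f_{n,\infty}$ and $f,$ this yields the desired $\alpha$ with $\alpha\circ f_{n,\infty}=f.$ If now $x\in\O{n}$ satisfies $f(\vi)\,x\,f(\vj)\star=0$ for every $i,j,$ then applying $\alpha^{-1}$ to this identity and using $f=\alpha\circ f_{n,\infty}$ yields
\begin{equation*}
f_{n,\infty}(\vi)\,\alpha^{-1}(x)\,f_{n,\infty}(\vj)\star = 0 \qquad \text{for every } i,j,
\end{equation*}
whence $\alpha^{-1}(x)=0$ by the previous corollary, and hence $x=0.$

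The main obstacle is the invocation of classification: the standard forms of Kirchberg--Phillips deliver only approximate (or asymptotic) unitary equivalence of $f_{n,\infty}$ and $f,$ whereas the argument above uses genuine conjugation by an automorphism. Upgrading approximate unitary equivalence to strict conjugacy by an automorphism of $\O{n}$ is a standard but nontrivial step, proceeding through a one-sided Elliott-style intertwining argument, whose applicability to a unital embedding of one Kirchberg algebra into another must be confirmed in the present setting.
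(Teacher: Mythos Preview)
Your approach coincides with the paper's: both reduce an arbitrary unital inclusion $f$ to the specific $f_{n,\infty}$ via an automorphism of $\O{n}$ supplied by Kirchberg--Phillips classification, and then transfer the basis property through that automorphism. The only difference is cosmetic---the paper phrases the $K$-theoretic input as lifting an automorphism $\alpha$ of $\integers_n$ to $\O{n}$ rather than matching $\KK$-classes via the UCT---and you are in fact more scrupulous than the paper in flagging that the standard uniqueness theorem delivers only approximate unitary equivalence and that an intertwining step is needed, a subtlety the paper passes over without comment.
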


\section{The corona $\corona:=\Mbb$}\label{sect:the corona}
Corona algebras provide a huge supply of nontrivial orthogonal elements, but on the other hand, their representation theory is complex, due to the fact that a corona algebra often cannot  be represented faithfully on a separable Hilbert space. Furthermore, hereditary subalgebras of the corona are well-understood only in the $\sigma$-unital case.  These various aspects make corona algebras an interesting test case for our theory.

The abundance of projections in a corona algebra leads to some subtle simplifications in the theory of open projections in this case.
The term \textit{open projection} usually refers to projections of $A\dd$ in the universal representation of $A$ having the property that $p A\dd p \cap A$ is a hereditary subalgebra of $A.$ The universal representation is sufficiently large so that its projections distinguish all hereditary subalgebras of $A,$ in other words, every hereditary subalgebra can be written (uniquely) in terms of an open projection as shown.  If we consider instead a faithful nondegenerate representation and say that $p\in\Bh$ is \textit{open} if $p=\sup a_\alpha$ for some increasing net $(a_\alpha )\subset A_+,$ then, in general, we can distinguish just the hereditary subalgebras that arise as two-sided \textit{annihilators} of subsets of $A.$ But, in the special case of $\sigma$-unital hereditary subalgebras of a corona algebra, we can do better. The significance of the next theorem is that it lets us study open projections in a representation of our choice rather than in the large and complicated universal representation. This could for example be helpful for studying the Cuntz semigroup of a corona algebra through its open projections.

\begin{theorem} Let $B$ be the corona algebra of a $\sigma$-unital \Cstar-algebra. Let $\pi\colon B\to\Bh$ be a nondegenerate faithful representation of $B.$ Then, $\sigma$-unital hereditary subalgebras $H$ of $B$ are of the form  $p \Bh p \cap B$ for some $p\in\Bh.$
 \label{th:open.projs.for.corona.algs}\end{theorem}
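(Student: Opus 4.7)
Given a $\sigma$-unital hereditary subalgebra $H$ of $B$, I would choose a strictly positive element $h\in H_+$ and take $p\in\Bh$ to be the range projection of $\pi(h)$; the claim to verify is $H = p\Bh p\cap\pi(B)$, after the standard identification of $B$ with $\pi(B)$.

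The inclusion $\pi(H)\subseteq p\Bh p$ is straightforward from the approximate-unit property: since $h$ is strictly positive in $H$, the sequence $(h^{1/n})$ is a norm approximate unit of $H$, so for any $a\in H$ we have $h^{1/n}a h^{1/n}\to a$ in norm. Each $\pi(h^{1/n})\leq p$, hence $\pi(h^{1/n} a h^{1/n})\in p\Bh p$; by norm-closedness of $p\Bh p$ the limit $\pi(a)$ lies there as well.

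The reverse inclusion is the heart of the proof. Given $x\in B$ with $\pi(x)=p\pi(x)p$, the strong-operator convergence $\pi(h^{1/n})\nearrow p$ in $\Bh$ gives $\pi(h^{1/n}x h^{1/n})\to\pi(x)$ strongly, and then faithfulness of $\pi$ on $B$ would close the argument \emph{if} this convergence were in norm. The main obstacle is precisely that strong-operator convergence does not automatically promote to norm convergence, so $x$ is not yet manifestly in the closed hereditary subalgebra $H=\overline{hBh}$.

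To close this gap I would exploit that $B$, being the corona of a $\sigma$-unital \Cstar-algebra, is SAW* in the sense of Pedersen and enjoys the associated countable-saturation property that lets approximately finitely realizable countable systems of norm equations in $B$ be realized exactly. I would use this to produce a positive local unit $k\in H_+$ with $\|k\|\leq 1$ and $kx = x = xk$ in $B$; once such $k$ is in hand, hereditarity of $H$ together with $k\in H$ yields $x = kxk\in kBk\subseteq H$, finishing the proof. The construction of $k$ is the principal technical step: one must translate the strong-operator hypothesis $\pi(x)(1-p) = 0 = (1-p)\pi(x)$ into a finitely realizable family of approximate algebraic equations $\{\|ky - y\|<\varepsilon,\ \|yk - y\|<\varepsilon,\ 0\leq k\leq 1,\ k\in H\}$ with $y = x$, then invoke the corona-algebra saturation machinery to collapse the approximation into an exact identity.
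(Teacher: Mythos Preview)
Your proposal has a genuine gap, and it lies precisely where you locate the ``principal technical step.'' The conditions you want to feed into the saturation machinery are
\[
0\le k\le 1,\quad k\in H,\quad \|kx-x\|<\varepsilon,\quad \|xk-x\|<\varepsilon,
\]
and for saturation to apply you must first check that these are \emph{finitely approximately realizable} in $B$. But that amounts to showing that for each $\varepsilon>0$ there is some $k\in H_+$, $\|k\|\le 1$, with $\|kx-x\|<\varepsilon$ and $\|xk-x\|<\varepsilon$. All you have available is the strong-operator convergence $\pi(h^{1/n})\pi(x)\to p\,\pi(x)=\pi(x)$, and promoting this to norm convergence in $B$ is exactly the difficulty you identified two paragraphs earlier. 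You have relocated the problem, not solved it. Worse, the saturation step is redundant: if the approximate relations above did hold for every $\varepsilon$, then $\|kxk-x\|<2\varepsilon$ with $kxk\in H$, so $x\in\overline{H}=H$ directly and no saturation is needed. The entire content of the theorem is therefore in the ``translation'' you leave unexplained.

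The paper closes this gap by a different route: Pedersen's double annihilator theorem for corona algebras \cite[Th.~15]{Pedersen1986} gives $H=H^{\perp\perp}$ for every $\sigma$-unital hereditary subalgebra of $B$, and Bice \cite[3.3 and 3.4]{Bice.arxiv} shows that annihilator hereditary subalgebras are always of the form $p\Bh p\cap B$ in any faithful nondegenerate representation. Your choice of $p$ as the range projection of $\pi(h)$ is in fact correct, and you can finish along these lines: for $b\in H^{\perp}$ one has $bh=0$, hence $\pi(b)\pi(h^{1/n})=0$ for all $n$, hence $\pi(b)p=0$; if $\pi(x)=p\pi(x)p$ this gives $\pi(bx)=0$ and thus $bx=0$ by faithfulness, and symmetrically $xb=0$. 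So $x\in H^{\perp\perp}$, and now Pedersen's theorem gives $H^{\perp\perp}=H$. The corona property enters through the double annihilator theorem, not through saturation.
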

\begin{proof} Suppose that the given $\sigma$-unital hereditary subalgebra $H$ happens to be the annihilator of a subset of the larger algebra $B.$ Then the desired result $H=p \Bh p \cap B$ holds by \cite[3.3 and 3.4]{Bice.arxiv}. On the other hand, by Pedersen's double annihilator theorem for corona algebras \cite[Th.15]{Pedersen1986}, every  $\sigma$-unital hereditary subalgebra $H$ of a corona algebra can in fact be written as the annihilator of some subset of the corona algebra $B.$ So the result holds in general, as was to be shown.
\end{proof}

Leaving for now the topic of open projections, what about the basis problem for a corona algebra?

In fact, we can find one solution to the basis problem straightaway.
\begin{example}\label{ex:Linf.construction.in.corona}
Let  $B$ be a  stable and $\sigma$-unital \Cstar-algebra. Recall that in \cref{ex:Linf.construction.in.multipliers,cor:embed.essential} we constructed a strongly convergent sum of projections $p_i$ in the multiplier algebra $\Mb:$
\begin{center}\begin{tikzpicture}[auto]
  \node (Mb) at (0,0)  {$\{p_i\}\subset\L{\infty} ([0,1],\mu)\subset\Mb$};    
  \node (BH1) [right=of Mb] {$B(\Hilbert_1).$};   
\draw[right hook->] (Mb) -- (BH1);
  \end{tikzpicture}\end{center}
  Applying the canonical quotient map $\pi\colon\Mb\to\Mbb,$ the kernel of  $\pi$ does not nontrivially intersect the subalgebra $M$ given by the image of $\L{\infty} ([0,1],\mu)$ in $\Mb.$ (See \cref{ex:Linf.construction.in.multipliers}.)  But then this means that the vertical map $\pi$ in the commuting triangle
\begin{center}\begin{tikzpicture}[auto]
  \node (L) at (0,0)  {$\{p_i\}\subset\L{\infty} ([0,1],\mu)$};    
  \node (Mb) [above right=of L]  {$\Mb$};
  \node (Mbb) [below right=of L] {$\,\Mbb$};
  \node (BH1) [gray,right=of Mb] {$B(\Hilbert_1)$};   
  \node (BH2) [gray,right=of Mbb] {$B(\Hilbert_2)$};
\draw[->] (L) edge[right hook->]            (Mbb)
              edge[left hook->]           (Mb)
        (Mb)  edge  node{$\pi$}   (Mbb);           
\draw[gray,left hook->] (Mb) -- (BH1);
\draw[gray,right hook->] (Mbb) -- (BH2);
  \end{tikzpicture}\end{center}
restricts to a \Cstar-isomorphism, hence a \Wstar-isomorphism, on the image $M$ of $\L{\infty} ([0,1],\mu).$ Thus we have projections $\{p_i\}\subset M$ summing to 1 strongly with respect to the (any) faithful nondegerate representations shown in grey in the above diagram. (See  \cite[p.898, Cor.8.iii]{DS2} for the strong convergence.) As in \cref{ex:Linf.construction.in.multipliers}, these projections are Murray-von Neumann equivalent and generate a copy of $\Oinf.$\end{example}
We summarize this as follows:
\begin{proposition} Let $\Mbk$ denote the multiplier algebra of a stable $\sigma$-unital $\Cstar$-algebra. Then
\begin{enumerate}\item There exists a  copy of $\Oinf$ in $\Mbk$ whose diagonal projections converge to 1 strictly,
\item it separates points in the multiplier algebra, and
\item it separates points in the corona algebra.
\end{enumerate}\label{cor:coronaOinf}
\end{proposition}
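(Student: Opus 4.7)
The plan is to assemble this proposition from \cref{ex:Linf.construction.in.multipliers,ex:Linf.construction.in.corona} and the machinery already built in \cref{sect:Some useful lemmas} together with \cref{cor:embed.essential}; essentially no new computation is required. For (1), stability of $B$ gives $B\tensor\compact\isom B$, so $\Mbk$ coincides with $\Mb$, and the construction of \cref{ex:Linf.construction.in.corona} sits inside $\Mbk$ as a copy of $\Oinf$ generated by a countable orthogonal family $\{p_i\}$ drawn from a diffuse masa $M\isom\L{\inf} ([0,1],\mu)$; each $p_i$ is Murray--von Neumann equivalent to $1$, and the partial sums converge to $1$ strongly in any faithful nondegenerate representation of $\Mbk$ on some Hilbert space $\Hilbert_1$. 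The principle recalled immediately before \cref{cor:embed.essential}---that strong convergence in $\Bh$ of a sequence of projections lifts to strict convergence of the associated projections in the multiplier algebra---then yields $\sum_i p_i\to 1$ strictly in $\Mbk$, as required.

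For (2), I mimic the proof of \cref{cor:embed.essential}: given $x\in\Mbk$ with $p_i x p_j=0$ for all $i,j$, set $S_n:=\sum_{i\le n}p_i$; since $S_n\to 1$ strictly and multiplication is separately strictly continuous in $\Mbk$, summing the relation over $i$ gives $S_n x p_j\to x p_j$ strictly on the left while the left side is identically zero, hence $xp_j=0$, and a second sum over $j$ concludes $x=0$. For (3) the same strategy carries over to the corona: by the second half of \cref{ex:Linf.construction.in.corona} the image projections $\pi(p_i)\subset\corona$ also sum to $1$ strongly in any faithful nondegenerate representation of $\corona$ on some $\Hilbert_2$. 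Given $y\in\corona$ with $\pi(p_i)\,y\,\pi(p_j)=0$ for all $i,j$, separate SOT-continuity of multiplication on bounded sets of $B(\Hilbert_2)$ lets me sum over $i$ to obtain $y\,\pi(p_j)=0$ and then over $j$ to conclude $y=0$ on $\Hilbert_2$; faithfulness of the representation then gives $y=0$ in $\corona$.

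The only delicate point is the passage ``strong convergence in $\Bh$ $\Rightarrow$ strict convergence in the multiplier algebra'' used in (1); once that is granted, parts (2) and (3) are formal consequences of the separate continuity of multiplication in, respectively, the strict topology on $\Mbk$ and the strong operator topology on $B(\Hilbert_2)$, and everything else is bookkeeping around the earlier example together with the observation that the quotient map $\Mbk\to\corona$ sends the strongly-summing family $\{p_i\}$ to a strongly-summing family $\{\pi(p_i)\}$ in the corona.
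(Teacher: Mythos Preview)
Your argument is essentially correct, and for (1) and (2) it follows the paper's route almost exactly: the paper also just points back to \cref{ex:Linf.construction.in.multipliers} and the remark preceding \cref{cor:embed.essential} for the strict convergence, and then invokes separate continuity of multiplication (via \cref{prop:strictlypositive}) for separation of points in the multipliers.

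One caution about your phrasing in (1). The principle before \cref{cor:embed.essential} says: if $q_i\to 1$ strongly in the particular $\Bh$ for which $\compacts=\compacts(\Hilbert)$, then $1\tensor q_i\to 1$ strictly in $\Mbk$. That is the correct input here, since the projections from \cref{ex:Linf.construction.in.multipliers} live in that $\Bh$ and enter $\Mbk$ as $1\tensor q_i$. Your sentence ``the partial sums converge to $1$ strongly in any faithful nondegenerate representation of $\Mbk$ \dots\ the principle then yields strict convergence'' reads as though you are deducing strict convergence in $\Mbk$ from strong convergence in an arbitrary faithful representation of $\Mbk$; that implication is false in general (the strict topology is strictly finer). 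Keep the strong-convergence-in-representations fact around for (3), but for (1) invoke the principle in its original form.

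For (3) you take a somewhat different path from the paper. The paper appeals to \cref{prop:quotients}: strict positivity of $\{p_i\}$ in $\Mbk$ passes to the quotient, and then \cref{prop:can.solve.if.sp} gives separation of points in $\corona$. You instead use the strong convergence of $\{\pi(p_i)\}$ in a faithful representation of $\corona$ established in \cref{ex:Linf.construction.in.corona}, together with separate SOT-continuity of multiplication on bounded sets. Both work; your route is more hands-on and avoids the state-space detour, while the paper's route is cleaner in that it never needs to represent the corona at all and shows that the conclusion is stable under any further quotient.
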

\begin{proof} Proof of $(1):$ The strict convergence of the diagonal projections was shown in \cref{ex:Linf.construction.in.multipliers} and the comments immediately following.

Proof of $(2)$ and $(3):$ The separation of points in the multipliers come from the strong convergence noted above and \cref{prop:strictlypositive}. Separation of points in the corona follows from \cref{prop:quotients}.\end{proof}
The above example and proposition is inspired by Kadison's work in \cite[5.1.6]{kadison1984}, and  a  similar example and application appeared independently in \cite{ShelahStephrans}. It is interesting to note that under the quotient map, the image of the masa  is still a masa in an appropriate sense (see \cite{JohnsonParrott}.)
The generalized Voiculescu theorem \cite[sect.16]{EK1} will show that  unitally embedded copies of $\Oinf$ in a corona algebra are unitarily equivalent, up to a purely $K$-theoretical obstruction, see section \ref{sect:extensions}.  The above solution will turn out to correspond to the  trivial element in the relevant group, $KK_w^1 (\Oinf,B),$ so in fact the above solution is canonical in quite a strong sense. On the other hand, these remarks will not apply in the nonstable case, and we do not know what happens in that case.
\section{Applications}\label{sect:extensions}
\subsection{Infinite sums of extensions}
 Let $A$ and $B$ be \Cstar-algebras, and let
$$
0\ \to \ B \ \to \ C \ \to \ A \ \to \ 0
$$
be an extension of $B$ by $A$ ({\it i.e.} a short exact sequence of
\Cstar-algebras).

 Recall that an extension of $B$ by $A$ is
determined by its Busby map --- the naturally associated map from $A$
to the quotient multiplier algebra, or corona algebra, of $B$, namely $\Mbb$.
The \Cstar-algebra of the extension is recovered by forming the pullback of the Busby map
and the canonical quotient map $\Mb  \to  \Mbb.$

Recall (see {\it e.g.} \cite{ElliottHandelman1989}) that, if $B$ is stable, so that the Cuntz
algebra $\O2$ may be embedded unitally in $\Mb,$ then the
Brown-Douglas-Fillmore addition of extensions, defined by
$$
\tau_1  \oplus \tau_2\ := \ s_1 \tau_1 s_1^* + s_2 \tau_2 s_2^*,
$$
where $\tau_1$ and $\tau_2$ are (the Busby maps of) two extensions of
$B$ by $A$, and $s_1$ and $s_2$ are (the images in $\Mbb$ of)
the canonical generators of $\O2$ (which are isometries with range
projections summing to 1), is compatible with Brown-Douglas-Fillmore
equivalence, defined as unitary equivalence with respect to the
unitary group of $\Mb $ --- or, rather, the image of this group in
$\Mbb$ --- and the resulting binary operation on equivalence
classes is independent of the embedding of $\O2$.
Indeed, it is sufficient to have a unital copy of $\O2$ in the corona, and this observation is used in Kirchberg's work \cite{KirchbergUnpublished}, see also  \cite{ElliottHandelman1989}.
With respect to this operation, the equivalence classes of extensions
of the \Cstar-algebra $B$ by the \Cstar-algebra $A$ form an
abelian semigroup.

 Extensions can be regarded as the basic building blocks of Kasparov's \KK-theory. In this theory as originally defined by Kasparov \cite{Kasparov1981}, only finite sums of extensions are defined. Actually,  Kasparov defined a bivariant group $\KK^1 (A,B)$ using Fredholm triples, and then pointed out an isomorphism with a group of extensions $\Ext(A,B),$ that group having the same notation but a coarser equivalence relation than in the related BDF theory \cite{BDF}. The coarser (and rather complicated) equivalence relation was needed for obtaining a useful structure called the Kasparov product. Kasparov already remarked that   a simpler equivalence relation like that of BDF could be used if the extensions were known to be absorbing. At the time it was not clear which extensions were absorbing. Baaj and Skandalis studied the existence of 6-term exact sequences in \KK, and pointed out the importance of a semisplitting map \cite{BS1989}. Skandalis defined a group called $\KKnuc^1,$ which consists of semisplit extensions with a weak nuclearity condition on the semisplitting map \cite{skandalis1998}. He pointed out that if $A$ or $B$ was nuclear, then $\KKnuc^1$ coincided with $\KK^1.$ Arveson \cite{Arveson1977} pointed out the  good stability properties of the class of semisplit extensions. In \cite{EK1} it was shown how to simplify the equivalence relation on most of the above groups, making it more like the relation used in \cite{BDF}, by adding a condition called the purely large property to the extensions considered. The purely large property was in fact a characterization of the previously mentioned property of being absorbing. This gave a picture of $\KKnuc^1(A,B)$ as purely large semisplit extensions modulo a certain form of unitary equivalence, together with a weak nuclearity condition. Further simplifications are possible with more conditions, thus for example the semisplit property is automatic if either $A$ is separable and has the LLP, and $B$ has the WEP; or  if $A$ is exact and $B$ is nuclear. See \cite{pisier2020} for more information on the LLP and the WEP.

 Both Arveson \cite{Arveson1977} and Lin \cite{Lin2002}
have made use of extensions that could be interpreted as infinite sums. Thus, Arveson considered an extension denoted $\pi\tensor\Id$ that was in effect an infinite sum. Lin gave a definition of ``diagonal extensions'' in \cite[par.\,1.4]{Lin2002}, and these were again of the form $m\tensor \Id\in \Mbk.$ Salinas \cite{Salinas1992} considered sequences of unital extensions and  a  weak topology, but he did not consider infinite sums of extensions.
Kirchberg used an infinite sum construction in a very specific way in his work on classification \cite{KirchbergUnpublished}, and we now use our results to adapt his construction slightly. In fact, it is possible that our construction is equivalent to his. A  similar construction was already considered in \cite{AAP}, both for \textsl{C*}-algebras and for von Neumann algebras.

\begin{definition}Let $\tau\colon A\rightarrow \Mbb$ be an extension. Define
$\tau_\inf (a):=\sum \vi\tau (a)\vi\star$ where the sum is taken in the corona algebra, and the $\vi$ are as in \cref{cor:coronaOinf}.\label{def:infinitesum}\end{definition}
The above sum is defined by  Theorem \ref{th:inf.sum.for.T} applied to the elements $\vi\tau(a)\vi\star,$ followed by using the partial isometry equation $\vi\vi\star\vi=\vi$ to simplify.
In the above sum all the extensions summed are the same, but it would be possible to allow them to vary. We will mostly be interested in the above sum, but since
 \cref{cor:coronaOinf} also showed that the $v_i$ come from the multiplier algebra and separate points there,   we do have an analogous definition at the level of the multiplier algebra:
\begin{definition}Let $\hat\tau\colon A\rightarrow \Mb$ be the splitting map of a trivial extension. Define
$\hat\tau_\inf (a):=\sum \vi\hat\tau (a)\vi\star$ where the sum is taken in the multiplier algebra, and the $\vi$ are as in \cref{cor:coronaOinf}.\label{def:multiplier.infinitesum}\end{definition}
To lighten the notation, we use the same symbol for $v_i$ in the multiplier algebra and in the corona algebra.

\begin{proposition} If $\tau\colon A\rightarrow \Mbb$ is a semisplit unital extension, then so is the corona sum
$\tau_\inf (a)=\sum \vi\tau (a)\vi\star$ where the $\vi$ are as in  \cref{cor:coronaOinf}. \label{prop:IsSemisplit}\end{proposition}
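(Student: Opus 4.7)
The plan is to lift the entire construction one level up, from the corona to the multiplier algebra, and then push it back down. Since $\tau$ is semisplit, fix a completely positive lift $\sigma\colon A\to \Mb$ so that $\pi\circ\sigma = \tau$, where $\pi\colon\Mb\to\Mbb$ denotes the canonical surjection. By \cref{cor:coronaOinf}(1) the isometries $v_i$ already live in $\Mb$, with diagonal projections $p_i := v_iv_i^*$ converging strictly to $1_{\Mb}$; since the partial sums form an increasing bounded net of positive elements, this upgrades to ultraweak convergence in $\Mb^{\dd}$, and the hypothesis of \cref{th:inf.sum.for.T} is met with $\corona := \Mb$.

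Applying \cref{th:inf.sum.for.T} to the uniformly bounded elements $c_i := v_i\sigma(a)v_i^*$ produces an element $\sigma_\inf(a) := \sum_i p_i c_i p_i\in\Mb$, which via $v_i^*v_i = 1$ coincides with $\sum_i v_i\sigma(a)v_i^*$, the exact multiplier-level analogue of $\tau_\inf$. Complete positivity of $\sigma_\inf$ is then inherited from that of $\sigma$: each truncation $\sigma_N\colon a\mapsto \sum_{i=1}^N v_i\sigma(a)v_i^*$ is a finite sum of compressions by isometries of a c.p.\ map, hence c.p., and the pointwise ultraweak limit of c.p.\ maps is c.p.\ because the cones $M_k(\Mb^{\dd})_+$ are ultraweakly closed for every $k$.

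It remains to verify that $\pi\circ\sigma_\inf = \tau_\inf$ and that $\tau_\inf$ is unital. For the first, $\pi$ extends to an ultraweakly continuous $*$-homomorphism $\pi^{\dd}\colon\Mb^{\dd}\to(\Mbb)^{\dd}$ sending each $v_i\in\Mb$ to its image $v_i\in\Mbb$ (using the same-symbol convention adopted after \cref{def:multiplier.infinitesum}) and $\sigma(a)\mapsto\tau(a)$. The partial sums of $\sigma_\inf(a)$ and of $\tau_\inf(a)$ are thereby intertwined by $\pi$ at every finite stage, and their ultraweak limits---elements of $\Mb$ and $\Mbb$ respectively by \cref{th:inf.sum.for.T}---must likewise correspond under $\pi^{\dd}$, yielding $\pi(\sigma_\inf(a)) = \tau_\inf(a)$. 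Unitality is then immediate: $\tau_\inf(1_A) = \sum v_iv_i^* = 1_{\Mbb}$, the sum being the image under $\pi$ of the corresponding sum in $\Mb$ provided by \cref{cor:coronaOinf}(1).

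The principal difficulty I anticipate is not in any one step but in the bookkeeping: one must keep straight that the infinite sums converge only ultraweakly, that their limits nevertheless land in the relevant \Cstar-algebra thanks to \cref{th:inf.sum.for.T}, and that ultraweak continuity of $\pi^{\dd}$ together with ultraweak closedness of the matrix-level positive cones is what simultaneously produces the lift and certifies its complete positivity. Once these pieces are in place the argument is essentially a one-line transfer of \cref{def:infinitesum} from the corona to the multipliers.
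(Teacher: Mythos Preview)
Your argument is correct and is genuinely different from the paper's. The paper proceeds via Stinespring: it writes the given semisplitting as $\hat\tau(a)=w^{*}\hat k(a)w$ for a \emph{homomorphism} $\hat k$ and an isometry $w$, applies the infinite-sum construction separately to $w$ and to $\hat k$, and then reassembles to obtain a semisplitting of the form $\bar w^{*}\hat k_{\infty}(a)\bar w$ with $\bar w$ a (diagonal) multiplier contraction. Your route is more elementary: you simply form $\sigma_{\infty}$ as the pointwise ultraweak limit of the finite compressions $\sigma_{N}$ and invoke ultraweak closedness of the matrix positive cones to retain complete positivity. Both approaches produce a semisplitting that is diagonal with respect to the $p_{i}$, which is exactly what is exploited later in the proof of \cref{lem:final.fullness.result}, so nothing is lost downstream by your simplification. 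The paper's Stinespring factorisation does exhibit the semisplitting in the explicit compressed-homomorphism form, but this extra structure is not actually used elsewhere.

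One small omission: you verify that $\tau_{\infty}$ is unital and semisplit, but not that it is a \mbox{*-homomorphism} (i.e.\ a Busby map at all). The paper dispatches this in one line; you should add the observation that orthogonality $v_{i}^{*}v_{j}=\delta_{ij}$ together with separate ultraweak continuity of multiplication gives $\tau_{\infty}(a)\tau_{\infty}(b)=\tau_{\infty}(ab)$, so that $\tau_{\infty}$ is multiplicative.
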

\begin{proof}
First of all, we must show that $\tau_\inf$ takes values in the corona, as implied above. But this follows from Theorem \ref{th:inf.sum.for.T}, with the elements $c_i$ equal to $\vi \tau(a)\vi\star.$ Since $\tau$ is a *-homomorphism, so is $\tau_\inf.$ If $\widehat\tau\colon A\rightarrow \Mb$ is a semi-splitting map for $\tau,$ then by a Stinespring theorem this semisplitting map can be taken to have the form
\begin{equation*} \hat\tau(a)=w\star \hat k(a) w \label{eq1}\end{equation*}
 for a suitable homomorphism $\hat k\colon A\rightarrow \Mbk$ and a multiplier algebra isometry $w.$ The map $\hat k$ can be taken to be the splitting map of any trivial unital absorbing extension.  (For these well-known facts, see \cite[p.353]{Arveson1977} and \cite{Kasparov1981}.)
 Applying the canonical quotient map $\pi$ into the corona, we have
 \begin{equation*} \tau(a)=w\star \pi(\hat k(a)) w \label{eq2}.\end{equation*}
 Evidently, by orthogonality of the \vi,  we have
 \begin{equation}\vi \tau(a) \vi\star = (\vi w\star )\pi(\vi\star \hat k_\infty (a)\vi) (w \vi\star)=(\vi w\star \vi\star)\pi(\hat k_\infty (a)) (\vi w \vi\star).\label{eq3}\end{equation}
Let us define $\delta_\inf (w)$ to be the infinite sum construction of Definition \ref{def:infinitesum} applied to the image in the corona of the multiplier isometry $w.$ Using  the above equation \eqref{eq3} to simplify $\delta_\inf (w\star) \pi(\hat k_\infty (a)) \delta_\inf (w)$ we conclude that
 \begin{equation*} \tau_\inf (a)= \delta_\inf (w\star) \pi(\hat k_\infty (a)) \delta_\inf (w).  \end{equation*}
  Now we lift the corona algebra element $\delta_\inf (w)$ to a contraction $\bar w$ in the multiplier algebra, and observe that $\bar w\star \hat k_\infty (a) \bar w$ gives a (completely positive) semisplitting for $\tau_\inf,$ as was to be shown.
We remark that $\bar w$ is approximately diagonal, in the sense that $p_i \bar w p_j \in B$ for $i\not=j.$ In fact, we could adjust the lifting  $\bar w$ to be diagonal by making the off-diagonal elements  zero, \emph{c.f.} Theorem \ref{th:inf.sum.for.T}.
\end{proof}

\subsection{Weak equivalence and  \Oinf}
Since, after all, the range of a extension is a subalgebra of the corona, one could wonder if unitary equivalence by unitaries from the multiplier algebra is really the most appropriate notion of equivalence to use. What if, for example, we define absorption with respect to unitary equivalence by unitaries from the corona? Calling this form of equivalence \emph{weak equivalence}, we would then consider what happens if an extension absorbs weakly nuclear trivial extensions with respect to weak equivalence. It is remarkable that this form of absorption is again characterized by the exact same algebraic property as for the more usual sort of absorption (namely, the purely large condition). We now  recall this point, and at the same time collect some useful facts about extensions.

In the forthcoming proposition, the term essential means that the Busby map is injective, the term full means that no nonzero element of the range of (some) semisplitting map is contained in a proper ideal, and the term absorbing in the nuclear sense means that the extension is unitarily equivalent to its own sum with any weakly nuclear trivial extension. We focus here on the case of unital extensions since it is needed in our application; for the nonunital case, see the remarks in \cite{gabe2016}. \begin{definition} An extension is \emph{purely large} if for every positive element $c$ of the extension algebra that is not contained in the canonical ideal $B,$ there exists a stable subalgebra $D \subset \overline {cBc}$ which is full in $B.$
\end{definition}
\begin{proposition}\label{lem:characterize.purely.large}
Let $A$ and $B$ be separable \Cstar-algebras, with
$B$ stable.
Consider  a unital essential  extension $\tau$ of $B$ by~$A$.
The following are equivalent:
\begin{enumerate}
\item[(i)] the extension  is unitally absorbing in the nuclear sense (i.e. absorbs trivial unital weakly nuclear extensions);
\item[(ii)] the extension algebra is purely large;
\item[(iii)] the image $\tau(A)$ in the corona $\Mbb$ has the property that its positive elements are  properly infinite and full; and
    \item[(iv)] the extension absorbs weakly nuclear unital trivial extensions with respect to weak equivalence (by corona unitaries).
\end{enumerate}\label{prop:omnibus.on.extensions}\end{proposition}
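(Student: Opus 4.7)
The plan is to insert (iv) into the chain of equivalences (i) $\iff$ (ii) $\iff$ (iii), which is the main theorem of \cite{EK1}. The implication (i) $\Rightarrow$ (iv) is immediate, because the canonical quotient $\Mb\to\Mbb$ sends unitaries to unitaries: strong (multiplier-unitary) absorption automatically gives weak (corona-unitary) absorption.

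The substantive direction is (iv) $\Rightarrow$ (iii). My plan is to exhibit a single weakly nuclear unital trivial extension $\sigma$ whose image $\sigma(A)\subset\Mbb$ already has the properly-infinite-and-full property, and then transfer that property to $\tau(A)$ via corona-unitary equivalence. To construct $\sigma$, I would start from any full weakly nuclear unital trivial splitting $\hat\sigma\colon A\to\Mb$ (available via Voiculescu/Arveson absorbing-extension theorems) and pass to the infinite-sum extension $\hat\sigma_\inf$ of \cref{def:multiplier.infinitesum}, using the copy of $\Oinf$ supplied by \cref{cor:coronaOinf}. Setting $\sigma:=\pi\circ\hat\sigma_\inf$, the shift structure on the $\Oinf$-generators $\vi$ lets me regroup the defining sum into two orthogonal subsums, each (via an $\Oinf$-endomorphism implemented in the corona) Cuntz-equivalent to $\sigma(a)$ itself, so $\sigma(a)$ is properly infinite; fullness of $\sigma(a)$ descends from fullness of $\hat\sigma$, and weak nuclearity of $\sigma$ is preserved by the strictly convergent infinite sum, exactly as in \cref{prop:IsSemisplit}.

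By (iv) there is a corona unitary $U$ with $\tau(a)=U(\tau\oplus\sigma)(a)U\star$ for all $a\in A$. Since $(\tau\oplus\sigma)(a)$ dominates the orthogonal summand contributed by $\sigma(a)$, which is properly infinite and full in the unital algebra $\Mbb$, a standard Cuntz-semigroup calculation shows that $(\tau\oplus\sigma)(a)$ is itself properly infinite and full: the key ingredient is that a full, properly infinite positive element $b$ in a unital \Cstar-algebra satisfies $[x]\leq[b]$ for every positive $x$, and then proper infiniteness of $b$ yields $2[(\tau\oplus\sigma)(a)]\leq[b]\leq[(\tau\oplus\sigma)(a)]$. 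Both properties are intrinsic to the image in $\Mbb$ and thus invariant under conjugation by corona unitaries, so $\tau(a)$ inherits them, and since this holds for every positive $a\in A$, condition (iii) follows.

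The main obstacle I anticipate is verifying that $\sigma(A)$ really has the properly-infinite-and-full property in $\Mbb$; in particular the $\Oinf$-regrouping argument for proper infiniteness must be made precise, probably by invoking the unilateral shift endomorphism of $\Oinf$ and checking that it intertwines the infinite sum with its restriction to a sub-family of generators. The remaining step, that weak nuclearity survives the infinite sum, is routine given the strict convergence of the diagonal projections recorded in \cref{cor:coronaOinf}.
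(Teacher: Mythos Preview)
Your overall strategy for inserting (iv) into the cycle is exactly the paper's: show (i)\,$\Rightarrow$\,(iv) trivially, and then (iv)\,$\Rightarrow$\,(iii) by exhibiting a single trivial weakly nuclear extension $\sigma$ whose range already has properly infinite and full positive elements, passing the property up to $\tau\oplus\sigma$ by majorization, and then across to $\tau$ by corona-unitary invariance. That core argument is the same in both.

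Two points where you diverge from the paper are worth flagging. First, your choice of $\sigma$ is unnecessarily elaborate. The paper simply takes $\sigma$ to be Kasparov's canonical unital weakly nuclear trivial extension, whose positive range elements are properly infinite and full essentially by construction; there is no need to invoke the infinite-sum machinery of \cref{def:multiplier.infinitesum} and \cref{prop:IsSemisplit}. Your route works, but it imports the paper's later applications back into a foundational proposition, which is stylistically backwards and adds verification burdens (preservation of weak nuclearity, the regrouping argument) that the paper avoids entirely with a one-line citation.

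Second, and more substantively, you over-attribute to \cite{EK1}. The paper cites \cite{EK1} only for (i)\,$\Leftrightarrow$\,(ii); the equivalence with (iii) is \emph{not} taken as known. The paper closes the cycle by giving a direct proof of (iii)\,$\Rightarrow$\,(ii): for positive $c$ in the extension algebra with $\pi(c)$ properly infinite and full, one gets $\pi(c)\geqcomp 1$ in the corona (via \cite[Prop.~3.5]{RK}), lifts to find $r'\in\Mb$ with $r'cr'^{*}=1_{\Mb}$, and then $V:=c^{1/2}r'^{*}$ is an isometry whose range projection generates a stable full hereditary subalgebra inside $\overline{cBc}$, establishing the purely large property. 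You should either supply this argument or verify that the formulation (iii) really does appear in \cite{EK1} in a directly quotable form; as written, your proof has a gap at exactly this link.
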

\begin{proof}
From \cite[Theorem, section 17]{EK1}, see also \cite[Remark 2.9]{GR2020}, we have the equivalence of (i) and~(ii).
  It is clear that (i) implies (iv). We now show that (iv) implies (iii). Thus, we are given that $\tau\oplus\sigma \sim_{w} \tau$ where $\sim_{w}$ denotes unitary equivalence by a corona unitary, and $\sigma$ is any trivial weakly nuclear extension. Setting $\sigma$ equal to Kasparov's canonical example
  of a unital, weakly nuclear, and trivial extension, we notice that the positive elements of the range of $\sigma$ are properly infinite and full. But then the positive element $\tau(a)\oplus\sigma(a)$ is properly infinite and full because it majorizes $\sigma(a)$.  Unitary equivalence preserves both fullness and the purely infinite property, so then it follows from the hypothesis that the positive elements of the range of the given extension $\tau$  are purely infinite and full, as we wanted to show.

 It remains only to show that (iii) implies (ii). The hypothesis gives that if an element $c$ from the
extension algebra is positive, then the image in the corona,  $\pi(c)$ is
properly infinite and full. To show that (ii) holds, we  show that the
algebra $A:=\overline {cBc}$ contains a stable full subalgebra.
The hypothesis implies that $\pi(c)\geqcomp
1$ by proposition 3.5 in \cite{RK}.  (We say that $p\geqcomp q$ if there exists a sequence of elements
$(r_n)$  such that $r_npr_n^{*}$ goes to $q$ in norm. See \cite{RK}.)
There is thus a sequence $r_n$ such that $r_n\pi(c)r_n^{*}\longrightarrow 1$ in norm, and therefore, choosing a
sufficiently large $n,$ the operator $r_n\pi(c)r_n^{*}$ is invertible.
Lifting to the multipliers, there is thus an $\tilde {r}$ such that
$\tilde {r}c\tilde {r}^{*}=1_{{\mathcal M}(B)}+b$. The element $b$ belongs
to the canonical ideal $B,$ which is stable, so there is a sequence of
isometries, $v_i,$ coming from stability, such that $v_i^{*}bv_i$ goes to zero in
norm\cite{HjelmborgRordam1998}. We conclude that for some index $i$, the
expression $v_i^{*}\tilde {r}c\tilde {r}^{*}v_i=1+v_i^{*}bv_i$ is close enough to 1 to be invertible,
and thus there is an $r'\in {\mathcal M}(B)$ such that $r'cr'{}^{*}=1_{
{\mathcal M}(B)}$.

Since $r'cr'{}^{*}=1_{{\mathcal M}(B)},$ the element  $V:=c^{
1/2}r^{\prime *}$ is an infinite
isometry. Since $VV^{*}\leq c\|r'\|^2$ ,  the stable full hereditary subalgebra
$\overline{VV^{*}BVV^{*}}$  is contained in $A=\overline {cBc}.$ 
This establishes the purely large property.
\end{proof}
As Skandalis already pointed out, in the above weak nuclearity is automatic if $A$ or $B$ is nuclear  \cite{skandalis1998}.
The above result shows that the purely large property is insensitive to small changes in the equivalence relation used. Thus $\KK(A,B)$ and $\KKw(A,B)$ are, to be sure, distinct groups, but they both have the same basic description as a group given by purely large extensions up to (a form of) unitary equivalence. (There exists also a related group $\KL(A,B),$ and similar comments apply.)
In certain cases, such as the group $\KK_w^1(\Oinf,B)$ mentioned earlier, the purely large condition turns out to be automatic. Both $\KK_w^1(\Oinf,B)$ and $\KK^1(\Oinf,B)$ consist of unitary equivalence classes of copies of $\Oinf,$ and differ only in the details of the unitaries which are considered.

We would like to apply these remarks to find if groups such as  $\KK^1(\Oinf,B),$ or its quotient, $\KK_w^1(\Oinf,B),$  can be computed. The UCT theorem, which is a kind of proto-index theorem,  can sometimes be used to compute $\KK^1(\Oinf,B),$ under the condition that both \Cstar-algebras are separable and the first algebra is also bootstrap class. In our case the first algebra, $\Oinf,$ is well-known to be separable, nuclear,  and bootstrap class. So the only remaining condition is that $B$ should be separable. Applying, then, the UCT theorem\cite{SchochetRosenberg},  we have the exact sequence:
\[0 \rightarrow{\Ext}(K_* (\Oinf) , K_{*}(B)) \rightarrow KK^*  (\Oinf,B) \rightarrow{\Hom}(K_*(\Oinf),K_* (B))\rightarrow 0.\]
But  $\Ext(\Z,H)=\{0\}$ and $\Hom(\Z,H)=H,$ for any abelian group $H,$ so  the above short exact sequence simplifies to
$$\KK^1  (\Oinf,B) =K_1 (B)$$ provided $B$ is separable.

Thus, we have computed $\KK^1  (\Oinf,B)$ and found that it equals $K_1(B).$ The group $\KK_w^1(\Oinf,B)$ is by definition a quotient of $\KK^1  (\Oinf,B),$ and the conclusion is that  $\KK_w^1(\Oinf,B)$ is a quotient of the usual $K$-theory group $K_1(B);$ or to phrase things differently, it is the $K_1$-group of $B$ with a
possibly  weaker than usual equivalence relation.
It is interesting and subtle that these sets of unitary equivalence classes of unitally embedded corona copies of $\Oinf$ are themselves groups, and this was the key fact that was used in the computation. We record the result as a Corollary:
\begin{corollary} Let $B$ be a separable and unital \Cstar-algebra. The unitary equivalence classes of unitally embedded copies of $\Oinf,$ in the corona of $B,$ are enumerated by the group $\KK_w^1(\Oinf,B),$  which is isomorphic to a quotient of the group $K_1(B).$ \end{corollary}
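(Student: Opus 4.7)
The plan is to assemble the corollary directly from the preceding material in this section, so the main work is organisational. The three ingredients I would combine are: (a) an identification of unitally embedded copies of $\Oinf$ in $\Mbb$ (up to corona unitary equivalence) with weak equivalence classes of unital essential extensions of $B$ by $\Oinf$; (b) the fact that every such extension is absorbing, so that the BDF sum equips these classes with an abelian group structure, which by definition is $\KKw^1(\Oinf,B)$; and (c) the UCT computation immediately preceding the corollary, giving $\KK^1(\Oinf,B)\cong K_1(B)$, together with the fact that $\KKw^1(\Oinf,B)$ is by construction a quotient of $\KK^1(\Oinf,B)$.

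For (a) there is essentially nothing to do: a unital $*$-homomorphism $\tau\colon\Oinf\to\Mbb$ is, tautologically, the Busby map of a unital extension of $B$ by $\Oinf$, and the simplicity of $\Oinf$ forces $\tau$ to be injective, so the extension is essential. Two such maps give unitarily equivalent ranges inside $\Mbb$ if and only if the underlying extensions are weakly equivalent in the sense of \cref{prop:omnibus.on.extensions}(iv), so the set of unitary equivalence classes of unital corona embeddings of $\Oinf$ is exactly the set of weak equivalence classes of unital essential extensions of $B$ by $\Oinf$.

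For (b) I would verify condition (iii) of \cref{prop:omnibus.on.extensions}: every nonzero positive element of $\tau(\Oinf)$ should be properly infinite and full in $\Mbb$. Proper infiniteness is inherited from $\Oinf$ through the injective $*$-homomorphism $\tau$, since $\Oinf$ is purely infinite and simple and proper infiniteness is a Cuntz-equivalence property preserved by injective $*$-homomorphisms. Fullness is automatic because $\tau$ is unital: the closed two-sided ideal of $\Mbb$ generated by any nonzero positive element $\tau(a)$ contains the image under $\tau$ of the ideal of $\Oinf$ generated by $a$, and by simplicity of $\Oinf$ this ideal is all of $\Oinf$, so in particular the generated ideal in $\Mbb$ contains $\tau(1_{\Oinf})=1_{\Mbb}$. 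Weak nuclearity of any completely positive semisplitting is automatic since $\Oinf$ is nuclear. Thus every such extension is absorbing in the nuclear sense, and the BDF sum, computed with respect to any unital copy of $\O{2}$ in $\Mbb$ supplied by \cref{cor:coronaOinf}, is well defined on equivalence classes and gives an abelian group, which by definition is $\KKw^1(\Oinf,B)$.

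Step (c) is then immediate: the displayed UCT computation just before the corollary gives $\KK^1(\Oinf,B)\cong K_1(B)$, and by construction $\KKw^1(\Oinf,B)$ is obtained from $\KK^1(\Oinf,B)$ by coarsening multiplier unitary equivalence to corona unitary equivalence, so it is a quotient of $K_1(B)$. The only point I expect to require real care is the verification of (b), but \cref{prop:omnibus.on.extensions} has already been stated in exactly the form needed, so the argument reduces to the purely algebraic observation that a unital copy of a purely infinite simple algebra inside a unital \Cstar-algebra is automatically purely large.
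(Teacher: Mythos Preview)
Your argument is essentially the one the paper gives, organised more carefully. The paper's justification is the discussion immediately preceding the corollary: it asserts without detail that for unital embeddings of $\Oinf$ the purely large condition is automatic, so that $\KKw^1(\Oinf,B)$ and $\KK^1(\Oinf,B)$ consist exactly of unitary equivalence classes of such embeddings; it then applies the UCT to obtain $\KK^1(\Oinf,B)\cong K_1(B)$ and notes that $\KKw^1$ is by definition a quotient. Your step (b), verifying condition (iii) of \cref{prop:omnibus.on.extensions} explicitly via the simplicity and pure infiniteness of $\Oinf$ together with unitality of $\tau$, supplies precisely the detail the paper leaves implicit. One small point of phrasing: in (a) you write ``unitarily equivalent ranges'', but what you actually need (and what both the paper and $\KKw^1$ encode) is unitary conjugacy of the Busby \emph{maps}, not merely of their images as subalgebras; these differ by automorphisms of $\Oinf$, so it is worth stating the identification at the level of maps.
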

\subsection{Triviality of extensions}
We now turn to the main result of this section, which is the surprising result that even if we sum extensions which are just semisplit, the sum defines a trivial extension. Thus, in this theory, infinite sums are generally equal to zero. The key idea of the proof is that, by a Hilbert hotel argument, we have $\tau_\infty=\tau_\infty + \tau_\infty,$ and in an abelian group, an element satisfying such an equation must be the trivial element.

Now we give a generalization of  \cite[Th.1.12.i]{Lin2002}. We will not actually use the full strength of the next result, but it seems interesting to provide a criterion for a extension to be absorbing. We start by recalling a technical lemma:
\begin{lemma}[\!\protect{\cite[Th.\,2.1(e)]{HjelmborgRordam1998}}]\label{lem:stability} Let $H$ be a $\sigma$-unital \Cstar-algebra.
If there exists a countable family of orthogonal equivalent projections in $\Mult(H)$ that sums strictly to 1, then $H$ is a stable \Cstar-algebra.\end{lemma}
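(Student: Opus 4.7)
The plan is to produce an explicit $\Cstar$-isomorphism $H\cong H\tensor\compacts$, using the given family of projections to manufacture a system of matrix units in $\Mult(H)$. Since $\compacts\tensor\compacts\cong\compacts$, it is in fact enough to show that $H\cong (p_1 H p_1)\tensor\compacts$ for the corner $p_1 H p_1$, because then
\[H\tensor\compacts \;\cong\; (p_1 H p_1)\tensor\compacts\tensor\compacts \;\cong\; (p_1 H p_1)\tensor\compacts \;\cong\; H.\]

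First I would use the assumed Murray-von Neumann equivalences $p_1\sim p_n$ in $\Mult(H)$ to select partial isometries $v_n\in\Mult(H)$ with $v_n^{*}v_n=p_1$ and $v_n v_n^{*}=p_n$, taking $v_1:=p_1$. Setting $e_{ij}:=v_i v_j^{*}$ yields a family of matrix units in $\Mult(H)$ satisfying $e_{ij}e_{kl}=\delta_{jk}e_{il}$ and $\sum_n e_{nn}=\sum_n p_n=1$ strictly in $\Mult(H)$. The map $e_{ij}\mapsto v_i v_j^{*}$ then extends to a $*$-homomorphism $\compacts\to\Mult(H)$ whose image acts nondegenerately on $H$, exactly because the sum converges strictly to $1$.

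Next I would define
\[\Psi\colon (p_1 H p_1)\tensor\compacts \longrightarrow H, \qquad \Psi(a\tensor e_{ij}):=v_i a v_j^{*},\]
and check that $\Psi$ is a $*$-homomorphism on the algebraic tensor product; since the right-hand side lies in $H$ (because $H$ is an ideal in $\Mult(H)$) and since $\Psi$ carries the standard $C^*$-norm on $(p_1 H p_1)\tensor\compacts$ to the norm inherited from $H$ (using the orthogonality of the $p_n$), it extends continuously. Injectivity follows from orthogonality by computing $v_k^{*}\Psi(\cdot)v_l$. For surjectivity, given $h\in H$ the strict convergence of $\sum_n p_n$ to $1$ yields norm convergence $\sum_{i,j\leq N} p_i h p_j\to h$ in $H$; and each summand equals $v_i(v_i^{*}h v_j)v_j^{*}$ with $v_i^{*}h v_j\in p_1 H p_1$, hence lies in $\Psi\bigl((p_1 H p_1)\tensor\compacts\bigr)$.

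The technical heart of the argument, and the step I expect to require the most care, is the surjectivity: one must justify that strict convergence of $\sum p_n$ in $\Mult(H)$ delivers the required \emph{norm}-convergent two-sided decomposition of an arbitrary $h\in H$, and that the finite partial sums $\sum_{i,j\leq N}v_i(v_i^{*}h v_j)v_j^{*}$ lie in the \emph{norm closure} of the algebraic image of $\Psi$. The $\sigma$-unitality hypothesis enters here to guarantee that $\Mult(H)$ has its usual good properties and that the strict topology behaves as expected on bounded sets. Everything else is a routine verification with matrix units.
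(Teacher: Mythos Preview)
The paper does not prove this lemma: it is quoted verbatim from \cite[Th.\,2.1(e)]{HjelmborgRordam1998} and used as a black box, so there is no ``paper's own proof'' to compare against. Your direct argument is, however, correct and is essentially the standard matrix-unit construction underlying that reference: from the equivalences $p_1\sim p_n$ one builds a nondegenerate copy of $\compacts$ in $\Mult(H)$ and an explicit isomorphism $H\cong (p_1Hp_1)\tensor\compacts$, whence stability follows from $\compacts\tensor\compacts\cong\compacts$.

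Two small remarks. First, your surjectivity step is cleaner than you suggest: with $P_N:=\sum_{i\leq N}p_i$, strict convergence gives $P_Nh\to h$ and $hP_N\to h$ in norm for every $h\in H$, so $P_NhP_N\to h$ in norm; each $P_NhP_N$ is a finite sum of elements $v_i(v_i^{*}hv_j)v_j^{*}\in\Psi\bigl((p_1Hp_1)\tensor\compacts\bigr)$, and the image of a $*$-homomorphism between \Cstar-algebras is automatically closed. No special care is required. Second, contrary to your final paragraph, the $\sigma$-unitality hypothesis is not actually used in this implication: the multiplier algebra and the strict topology make sense for any \Cstar-algebra, and the argument you wrote goes through verbatim without it. In Hjelmborg--R\o rdam's Theorem~2.1 the $\sigma$-unitality is needed for other equivalences (notably those involving approximate units and the characterisation of stability via scaling), not for the direction you are proving here.
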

Now we prove the promised criterion:
\begin{lemma}[\bf An absorption criterion] Let $B$ a separable simple stable \Cstar-algebra, and let $A$ be a separable \Cstar-algebra. Let $\hat\tau\colon A\rightarrow \Mb$ be the semisplitting map of a not necessarily trivial extension, and suppose that there are countably many orthogonal and equivalent projections $P_i \in \Mb$  with respect to which $\hat\tau(a)$ is diagonal.    Then the extension is absorbing in the nuclear sense.
 \label{lem:generalized.Lin}\end{lemma}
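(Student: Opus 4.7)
The plan is to apply \cref{prop:omnibus.on.extensions} and verify the equivalent condition~(ii): the extension algebra is purely large. Given a positive element $c$ in the extension algebra with $c \notin B$, we must produce a stable subalgebra $D \subset \overline{cBc}$ which is full in $B$. Since $B$ is simple, fullness of any nonzero subalgebra is automatic, so the entire content is the construction of a stable~$D$.

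First, the unital semisplitting hypothesis together with diagonality of $\hat\tau$ forces $\sum_i P_i = \hat\tau(1) = 1$ strictly in $\Mb$. Writing $c = \hat\tau(a) + b$ with $a \in A$ and $b \in B$, diagonality yields $\hat\tau(a) = \sum_i \hat\tau_i(a)$ with $\hat\tau_i(a) := P_i \hat\tau(a) P_i \in P_i \Mb P_i$, while $\|P_i b P_i\| \to 0$ since $P_i \to 0$ strictly in $\Mb$. Thus the corners $P_i c P_i$ are norm-close to $\hat\tau_i(a)$ for $i$ large, so each hereditary subalgebra $\overline{\hat\tau_i(a) B \hat\tau_i(a)} \subset P_i B P_i$ lies, up to arbitrarily small norm perturbation, inside $\overline{cBc}$.

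The candidate for $D$ is the norm closure $\overline{\sum_i \hat\tau_i(a) B \hat\tau_i(a)}$, a $c_0$-style direct sum of pairwise orthogonal hereditary subalgebras of $B$; it is $\sigma$-unital, and a routine functional-calculus smoothing places it inside $\overline{cBc}$. To verify stability of $D$ via \cref{lem:stability}, observe that each $P_j$ acts on $D$ as the projection onto the $j$-th summand, so the $P_j$ form a countable pairwise orthogonal family in $\Mult(D)$ whose partial sums converge strictly to $1_{\Mult(D)}$. The remaining condition is pairwise equivalence of the $P_j$ in $\Mult(D)$; this I would establish by invoking Brown's theorem on full hereditary subalgebras of $\sigma$-unital simple stable \Csas, which identifies all the $\overline{\hat\tau_i(a) B \hat\tau_i(a)}$ with each other and provides partial isometries in $\Mb$ that descend to $\Mult(D)$ to implement the equivalences.

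The main technical obstacle is exactly this descent step. The partial isometries supplied by Brown's theorem live in $\Mb$ and in general may act nontrivially on summands of $D$ other than the one they are supposed to connect; resolving this will require truncating them by the $P_i$ and showing that the truncations still implement the correct equivalence in $\Mult(D)$. Once equivalence and strict summability are in hand, \cref{lem:stability} delivers the stability of $D$, and \cref{prop:omnibus.on.extensions} converts the resulting purely large property back into the stated absorption in the nuclear sense.
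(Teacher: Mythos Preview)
Your strategy matches the paper's: verify condition~(ii) of \cref{prop:omnibus.on.extensions} by producing, inside the relevant hereditary subalgebra over $B$, a $\sigma$-unital algebra on which the $P_i$ act as multiplier projections satisfying the hypotheses of \cref{lem:stability}. The paper streamlines the execution in two ways. First, it checks the purely large condition only for $c=\hat\tau(a)$ with $a\in A$ positive, bypassing your perturbation from a general $c=\hat\tau(a)+b$; this is legitimate in the semisplit setting. Second, its candidate subalgebra is simply $H_0:=H\cap\overline{\hat\tau(a)B\hat\tau(a)}$, where $H$ is the hereditary subalgebra of $\Mb$ generated by the family $\{P_i\}$; since the $P_i$ commute with $\hat\tau(a)$ they multiply $H_0$ into itself and sum strictly to~$1$ in $\Mult(H_0)$, and \cref{lem:stability} is invoked directly. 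Notably, the paper does not spell out why equivalence of the $P_i$ in $\Mb$ descends to equivalence in $\Mult(H_0)$ --- precisely the point you isolate as the main technical obstacle --- so your caution there is warranted, and your proposed route via Brown's theorem on full hereditary subalgebras is a legitimate way to supply that step.
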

 \begin{proof} We verify the purely large condition of  \cref{prop:omnibus.on.extensions} :  we show that the hereditary subalgebra $\overline{\hat\tau(a)B\hat\tau(a)}$ contains a nonzero stable  subalgebra that is not contained in any proper ideal of $B,$ for positive nonzero elements $a\in A.$ The condition of not being contained in a proper ideal is automatic because $B$ is assumed simple. 
 Let $H$ denote the hereditary subalgebra of the multiplier algebra $\Mb$ that is generated by the projections $\{P_i\}.$
 Consider the nonempty hereditary subalgebra $H_0 := H \bigcap \overline{\left( \hat\tau(a)B\hat\tau(a)\right) }.$ By construction, the projections $P_i$ multiply $H_0$ into $H_0$. Thus, the projections $P_i$ are in $\Mult(H_0)$ and   sum strictly to 1 there. By Lemma \ref{lem:stability} this shows that $H_0$ is stable, as we wanted.
  \end{proof}
\begin{lemma} Let $A$ be a separable unital \Cstar-algebra and let $B$ be a separable stable \Cstar-algebra. Let $\tau\colon A \rightarrow \Mbb$ be a full semisplit unital extension. Then the corona sum $\tau_\infty =\sum \vi\tau\vi\star$ is a full semisplit  extension with properly infinite range. If we assume that either $A$ or $B$ is a simple \Cstar-algebra then we can drop the assumption that the extension $\tau$ is full.
\label{lem:final.fullness.result}\end{lemma}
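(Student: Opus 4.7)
The plan is to verify the three conclusions---semisplit, properly infinite range, and full---separately, each exploiting the Cuntz relations $\vi\star\vj = \delta_{ij}$ enjoyed by the corona generators furnished by \cref{cor:coronaOinf}. Semisplitness of $\tau_\infty$ is already supplied by \cref{prop:IsSemisplit}, so the content lies in the remaining two properties.

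For proper infiniteness of the range, I would employ a Hilbert-hotel argument inside $\corona$. The elements
\begin{equation*}
s_1 := \sum_{k=1}^\infty v_{2k-1}\, v_k\star, \qquad s_2 := \sum_{k=1}^\infty v_{2k}\, v_k\star
\end{equation*}
converge strictly in $\Mb$ (by a short orthogonality argument, using the strict convergence of $\sum\vi\vi\star = 1$ guaranteed by \cref{cor:coronaOinf}), and descend to isometries of $\corona$ with orthogonal range projections summing to $1$. A direct computation using $\vi\star\vj = \delta_{ij}$ yields
\begin{equation*}
s_1\,\tau_\infty(a)\,s_1\star = \sum_k v_{2k-1}\tau(a)v_{2k-1}\star, \qquad s_2\,\tau_\infty(a)\,s_2\star = \sum_k v_{2k}\tau(a)v_{2k}\star,
\end{equation*}
two mutually orthogonal positive summands whose sum is $\tau_\infty(a)$. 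Since conjugation by an isometry preserves Cuntz equivalence, each summand is Cuntz equivalent to $\tau_\infty(a)$, yielding $\tau_\infty(a)\oplus\tau_\infty(a) \precsim \tau_\infty(a)$, i.e.\ proper infiniteness.

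For fullness, the key observation is the identity $\vj\star\tau_\infty(a)\vj = \tau(a)$, obtained by applying ultraweakly continuous left and right multiplication to the defining sum $\tau_\infty(a) = \sum_i \vi\tau(a)\vi\star$ in $\corona\dd$ (the element $\tau_\infty(a)$ lying in $\corona$ by \cref{th:inf.sum.for.T}), and then collapsing via $\vj\star\vi = \delta_{ij}$. Consequently any closed two-sided ideal of $\corona$ containing $\tau_\infty(a)$ also contains $\tau(a)$, so fullness of $\tau$ upgrades at once to fullness of $\tau_\infty$.

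For the last sentence, the $A$-simple case is routine: for any nonzero $a\in A$, simplicity supplies a decomposition $1_A = \sum_k b_k\, a\, c_k$ in $A$, and pushing forward under the unital map $\tau$ places $1_\corona$ in the ideal of $\corona$ generated by $\tau(a)$, so $\tau$ is automatically full and the earlier argument applies. The $B$-simple case is the principal obstacle, since the corona of a simple stable $\sigma$-unital algebra need not itself be simple. The plan there is to exploit the specific construction of the $\vi$ in \cref{cor:coronaOinf}: each diagonal projection $\vi\vi\star$ descends from a projection of $\Mb$ that is Murray--von Neumann equivalent to $1_{\Mb}$, and hence is a full projection of $\corona$. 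One should then combine this observation with simplicity of $B$ via the purely large criterion of \cref{prop:omnibus.on.extensions}, whose ``not contained in a proper ideal of $B$'' clause becomes vacuous when $B$ is simple, so that the absorbing-type consequences needed to conclude fullness of $\tau_\infty$ at positive elements follow without assuming fullness of $\tau$.
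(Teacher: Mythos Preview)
Your treatment of semisplitness, proper infiniteness, fullness under the hypothesis that $\tau$ is full, and the $A$-simple case all match the paper's proof in substance; your Hilbert-hotel isometries and the identity $\vj\star\tau_\infty(a)\vj=\tau(a)$ simply make explicit what the paper leaves as ``evident'' or ``clear''.

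The $B$-simple case, however, is where your plan becomes imprecise, and the paper takes a sharper route than the one you sketch. Observing that each $\vi\vi\star$ is a full projection of $\corona$ does not by itself verify the purely large condition of \cref{prop:omnibus.on.extensions}: that condition asks, for each positive $c$ in the extension algebra not lying in $B$, for a \emph{stable} subalgebra of $\overline{cBc}$ that is full in $B$. Simplicity of $B$ disposes of the word ``full'' but not of the word ``stable'', and your proposal offers no mechanism for producing that stable subalgebra. The paper supplies exactly this missing ingredient by invoking \cref{lem:generalized.Lin}: the point of \cref{prop:IsSemisplit} is not merely that $\tau_\infty$ is semisplit, but that its semisplitting can be taken \emph{diagonal} with respect to the equivalent orthogonal multiplier projections $P_i=\vi\vi\star$. \cref{lem:generalized.Lin} then shows (via \cref{lem:stability}) that diagonality of the semisplitting together with simplicity of $B$ forces $\tau_\infty$ to be absorbing in the nuclear sense, and absorbing implies full. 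Your plan is headed toward the same destination but does not isolate the diagonal-semisplitting step that actually does the work.
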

\begin{proof} The semisplit property was shown in Proposition \ref{prop:IsSemisplit}. A positive element $x$ is said to be properly infinite if $x$ is equivalent under generalized Murray von Neumann equivalence to $x\oplus x.$ The definition of the sum is basically the same as the BDF sum we mentioned earlier.
In the case of a positive element of the form $\tau_\infty (a),$ as in Definition \ref{def:infinitesum}, it is thus  evident, by a Hilbert hotel argument, that the properly infinite property holds.
It is clear that the sum is full if the summands are. Now let us instead assume that the \Cstar-algebra $B$ is simple. Recall that in the proof of  Proposition \ref{prop:IsSemisplit} we found an explicit semisplitting for the given extension $\tau_\infty,$ and we remarked that the semisplitting could be taken to be diagonal. But then  Lemma \ref{lem:generalized.Lin} applies to show that the given extension $\tau_\infty$ is absorbing, and this implies fullness. Finally, if the $\Cstar$-algebra $A$ is assumed simple, then the range algebra is a simple unital subalgebra of the corona, and thus every nonzero positive element of it is in the same ideal as the unit: in other words, simple range implies fullness if the range is unital.
\end{proof}

Now we obtain our triviality result.
\begin{theorem}[\bf Triviality] Let $A$ and $B$ be separable \Cstar-algebras, with $B$ nuclear and stable. If $\tau\colon A\rightarrow \Mbb$ is a semisplit unital extension, and if either $\tau$ is  full or $B$ is simple, then $\tau_\infty$ is a trivial absorbing extension.
\label{th:triviality}\end{theorem}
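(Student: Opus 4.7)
The plan is to first establish that $\tau_\infty$ is absorbing in the nuclear sense, then run a Hilbert-hotel argument to show that $\tau_\infty$ is idempotent in the relevant \KK-group, and finally deduce triviality from the cancellation law in an abelian group.

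First, I would verify absorption. By \cref{lem:final.fullness.result}, under either of the dichotomous hypotheses of the theorem (fullness of $\tau$ or simplicity of $B$), the extension $\tau_\infty$ is full, semisplit, and has properly infinite positive range. The implication (iii)$\Rightarrow$(i) of \cref{prop:omnibus.on.extensions} then yields that $\tau_\infty$ is absorbing in the nuclear sense; the weak nuclearity hypothesis is automatic because $B$ is nuclear.

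Next, I would execute the Hilbert-hotel step. Work with the infinite Cuntz family $\{\vi\}\subset\Mbk$ from \cref{cor:coronaOinf}, whose diagonal projections sum strictly to $1_{\Mbk}$, and form
\[
u_1 := \sum_{i\geq 1} v_{2i-1} v_i\star, \qquad u_2 := \sum_{i\geq 1} v_{2i} v_i\star,
\]
where both sums converge strictly in $\Mbk$ (a routine check using $\sum\vi\vi\star = 1$ strictly), and hence descend to elements of $\corona$. A short computation using orthogonality of the $\vi\vi\star$ shows that $u_1,u_2$ are isometries in $\corona$ with $u_1 u_1\star + u_2 u_2\star = 1_\corona$, so $(u_1,u_2)$ gives a unital copy of $\O{2}$ inside $\corona$. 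A further direct calculation, using $u_1 v_k = v_{2k-1}$ and $u_2 v_k = v_{2k}$, yields
\[
u_1 \tau_\infty(a) u_1\star + u_2 \tau_\infty(a) u_2\star = \sum_{j\geq 1} v_j \tau(a) v_j\star = \tau_\infty(a).
\]
Interpreting the left side as the BDF sum $\tau_\infty \oplus \tau_\infty$ implemented by this new $\O{2}$-pair, and invoking the well-known independence of BDF addition from the chosen copy of $\O{2}$, one obtains the class identity $[\tau_\infty] + [\tau_\infty] = [\tau_\infty]$.

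Finally, I would invoke the group structure. Absorption of $\tau_\infty$ places its class in the abelian group $\KK^1(A,B) = \KKnuc^1(A,B)$ (the two coincide by nuclearity of $B$), where the identity $x+x=x$ forces $x=0$; hence $[\tau_\infty] = 0$. Combined with absorption, this gives a multiplier unitary $U \in \Mb$ implementing a unitary equivalence between $\tau_\infty$ and a trivial extension $\sigma$ with $*$-homomorphic lift $\widehat\sigma\colon A \to \Mb$; then $U\widehat\sigma(\cdot)U\star$ is a $*$-homomorphic lift of $\tau_\infty$, establishing triviality.

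The step I expect to be most delicate is the bookkeeping around the equivalence relation: passing from the Busby-level identity to an equation in an actual \emph{group} requires combining the semisplit and absorbing properties to reduce BDF-equivalence to ordinary multiplier-unitary equivalence, and confirming that the ad hoc $\O{2}$-pair $(u_1,u_2)$ does in fact implement the usual BDF sum. Once that is in place, the abelian-group cancellation $x+x=x\Rightarrow x=0$ and the conjugation-of-splittings argument close the proof.
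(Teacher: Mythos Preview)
Your proof is correct and follows essentially the same route as the paper's: establish absorption via \cref{lem:final.fullness.result} and \cref{prop:omnibus.on.extensions}, use a Hilbert-hotel argument to get $[\tau_\infty]+[\tau_\infty]=[\tau_\infty]$, and conclude triviality by cancellation in the abelian group $\KK^1(A,B)$. The paper leaves the Hilbert-hotel step as a one-line remark, whereas you make the bijection explicit by constructing the $\O{2}$-pair $(u_1,u_2)$ from the Cuntz family; this is a welcome elaboration but not a different idea.
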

\begin{proof} By \cref{prop:IsSemisplit,lem:final.fullness.result}, the sum $\tau_\inf$ defines a semisplit full extension with properly infinite range. By \cref{lem:characterize.purely.large}  the extension $\tau_\inf$  is then absorbing in the nuclear sense. This means that it is an element of $KK^1(A,B),$ defined as a group of absorbing extensions under unitary equivalence by multiplier unitaries (please see the beginning of this section for a discussion). But from the construction of $\tau_\infty$ as an infinite sum, the BDF sum $\tau_\infty + \tau_\infty$ is  equivalent to $\tau_\infty.$ Thus, at the level of abelian groups, the element defined by $\tau_\infty$ is trivial. Being trivial in the group means that it is in the same unitary equivalence class as, for example, Kasparov's trivial extension. But then $\tau_\infty$ is unitarily equivalent --- by a multiplier unitary --- to a trivial (\textit{i.e.} split) extension. Thus the extension $\tau_\infty$ is a trivial (\textit{i.e.} split) extension, as was to be shown.
\end{proof}
Because of the fairly simple and general nature of the proof, it seems  probable that any reasonable way to define infinite sums of extensions will have the same property as in the above Theorem, and thus will be equivalent in $\KKnuc^1(A,B)$ to the above sum.

\subsection{Subalgebras of the corona}
We conclude by mentioning that sometimes statements about extensions become simpler if we focus on the range of the extension (in the corona) instead of  the Busby map.

\begin{corollary}[\bf locally Calkin]\label{cor:subalgebras2}
Let $B$ be a separable, nuclear, and simple \Cstar-algebra. Denote by $\corona$ the corona algebra of $B\tensor \compacts.$ A separable  subalgebra, $S,$ self-adjoint or not, of $\delta_\inf(\Mbb)$ can be rotated into the Calkin algebra by a unitary; \emph{i.e.}  there exists a unitary $U\in\Mbk$ such that $U^*  S U$ is contained
in a copy of the Calkin algebra within $\Mbkbk$.
\end{corollary}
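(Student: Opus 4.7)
The plan is to recognise the inclusion $\iota\colon S\hookrightarrow\corona$ as $\sigma_\inf$ for a suitable unital $*$-homomorphism $\sigma\colon\bar S\to\Mbb$, apply the Triviality Theorem (\cref{th:triviality}) to conclude that $\iota$ is a trivial absorbing extension of $B\tensor\compact$, and then invoke the uniqueness of unital absorbing extensions (the generalized Voiculescu--Kasparov theorem underlying the discussion of $\KKw$ preceding \cref{th:triviality}) to conjugate $\iota$ onto a prechosen absorbing embedding of $\bar S$ into a copy of the Calkin algebra inside $\corona$.

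First I would replace $S$ by the separable \Cstar-subalgebra $\bar S$ that it generates, adjoining the unit $\delta_\inf(1)$ if necessary; since $\delta_\inf$ is a $*$-homomorphism, $\bar S$ is still contained in $\delta_\inf(\Mbb)$, and it clearly suffices to rotate $\bar S$. Applying the compressions $v_j^*(\cdot)v_j$ of \cref{cor:coronaOinf} to the lift of $\delta_\inf(y)\in\corona$ shows that $\delta_\inf\colon\Mbb\to\corona$ is injective, so I may define $\sigma:=\delta_\inf^{-1}|_{\bar S}\colon\bar S\to\Mbb$. By construction $\sigma_\inf=\delta_\inf\circ\sigma=\iota|_{\bar S}$. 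Taking $B\tensor\compact$ (separable, nuclear, stable, simple) in the role of $B$ in \cref{th:triviality}, the theorem then yields that $\iota|_{\bar S}$ is a trivial absorbing extension of $B\tensor\compact$ by $\bar S$.

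Next I would produce a target copy $\mathcal C\subset\corona$ of the Calkin algebra together with a unital embedding $\rho\colon\bar S\to\mathcal C$ that is absorbing as an extension of $B\tensor\compact$: the natural route is to pick a $\sigma$-unital hereditary subalgebra $H\subset B\tensor\compact$ whose corona $\Mult(H)/H$ is isomorphic to the Calkin algebra $B(\Hilbert')/\compact(\Hilbert')$ on some separable $\Hilbert'$ (built from projections available via \cref{cor:coronaOinf}); the standard hereditary-corona correspondence supplies an injective unital embedding $\mathcal C\hookrightarrow\corona$. Inside $\mathcal C$ the classical Voiculescu theorem provides an absorbing unital embedding $\rho\colon\bar S\to\mathcal C$, and \cref{prop:omnibus.on.extensions} upgrades absorption inside the Calkin to absorption in the ambient $\corona$ once one verifies that $\rho$ has purely infinite and full range in $\corona$.

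Finally, uniqueness of unital absorbing extensions delivers a unitary $U\in\Mbk$ with $U^*\iota(s)U=\rho(s)$ for every $s\in\bar S$, whence $U^*SU\subset U^*\bar SU=\rho(\bar S)\subset\mathcal C$, as required. The main obstacle will be verifying the semisplit hypothesis in \cref{th:triviality} when $\bar S$ is not assumed nuclear; the intended workaround is to exploit the diagonal structure of $\delta_\inf(\Mbb)$-elements with respect to the orthogonal projections $\{v_iv_i^*\}$, compressing a putative lift column-by-column into the nuclear algebra $\Mb$ and reassembling via the construction of \cref{th:inf.sum.for.T} to manufacture a weakly nuclear completely positive splitting of $\sigma$. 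A secondary technical point is the explicit construction of the hereditary subalgebra $H$ realising a Calkin copy in $\corona$; we expect this to be handled using only the abundance of projections furnished by \cref{cor:coronaOinf}, which in particular provides the needed countable family of orthogonal equivalent projections summing strictly to $1$ inside $\Mbk$.
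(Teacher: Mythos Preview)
Your approach is essentially the paper's, and your identification $\iota|_{\bar S}=\sigma_\infty$ is actually a clarifying step the paper's write-up leaves implicit. However, you are manufacturing difficulties at two places where the paper's argument is much shorter.

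First, the semisplitting of $\sigma$ (or of $\iota$) is automatic: $B$ is assumed nuclear, hence $B\tensor\compacts$ is nuclear, and the Choi--Effros lifting theorem then gives a completely positive splitting of \emph{any} extension of $B\tensor\compacts$ by a separable \Cstar-algebra. There is no need to assemble a diagonal lift by hand, and the nuclearity of $\bar S$ plays no role. (The paper simply says ``lifting $S$ to some separable unital subalgebra of the multiplier algebra'' and invokes the resulting semisplit extension.)

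Second, you do not need to build a hereditary subalgebra $H$ whose corona is a Calkin algebra. The paper takes the target to be Kasparov's canonical unital, weakly nuclear, trivial absorbing extension $\kappa$, which by construction takes values in $(1\tensor\Bh)/(1\tensor\Kh)\subset\Mbkbk$; this \emph{is} the required copy of the Calkin algebra, and its absorption in the ambient corona is part of Kasparov's theorem. With $\iota|_{\bar S}$ trivial and absorbing by the Triviality Theorem, the uniqueness of unital trivial absorbing (weakly nuclear) extensions then yields a multiplier unitary $U$ with $U^*\bar S U=\kappa(\bar S)\subset (1\tensor\Bh)/(1\tensor\Kh)$, exactly as you intended.
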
\begin{proof} We can as well replace $S$ by its \Cstar-algebra envelope.  Lifting $S$ to some separable unital subalgebra of the multiplier algebra, the quotient map into the corona  provides a semisplit extension with range  $S.$ The inclusion map of $S$ into $\Mbb$ will be denoted $\j.$ The triviality theorem (\cref{th:triviality}) provides an infinite sum $\j_\infty\colon S\to\Mbb$ which is an absorbing trivial extension.  Kasparov's trivial absorbing extension $\kappa$ takes values in $\frac{1\tensor\Bh}{1\tensor \Kh},$ a copy of the Calkin algebra within $\Mbkbk.$ As pointed out by Kasparov \cite{Kasparov1981}, see also \cite{EK1,gabe2016,GR2020}, two absorbing, unital, weakly nuclear, and trivial extensions are unitarily equivalent by a multiplier unitary, and this provides the desired unitary $U$ such that $U^* S U$ is contained in a copy of the Calkin algebra within $\Mbkbk$.
\end{proof}
It is reasonable to think of the range of the *-homomorphism $\delta_\inf(\Mbb)$   as a subalgebra of diagonal matrices, and then the above striking result says that we can move separable subalgebras of diagonal matrices into the Calkin algebra.
It seems appropriate to mention that this implies another  property of separable subalgebras of diagonal matrices. We first recall the following classic result:
\begin{lemma}[{\cite[pg. 344]{Arveson1977}}]
Consider the Calkin algebra of a separable infinite Hilbert space. If $D$ is a separable unital subalgebra of the  Calkin algebra, self-adjoint or not, and $x$ is an element of the Calkin algebra, then there exists a projection $p$ such that $p$ commutes with elements of $D$
and
\[
\dist(x,D)=\|(1-p)x p\|.
\] \label{lem:originalArveson}
\end{lemma}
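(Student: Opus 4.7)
The easy inequality $\dist(x,D) \ge \|(1-p)xp\|$ is immediate from the commutation hypothesis: if $p$ commutes with $D$, then for every $d \in D$ one has $(1-p)dp = (1-p)pd = 0$, so $(1-p)(x-d)p = (1-p)xp$ and hence $\|(1-p)xp\| \le \|x-d\|$; taking the infimum over $d \in D$ gives the bound. The content of the lemma is therefore to exhibit a projection $p$ commuting with $D$ that attains equality.

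For the construction of the optimal $p$, the strategy is to exploit the large supply of projections in the commutant of $D$ inside the Calkin algebra, supplied by quasicentral approximate units. First lift $D$ to a separable self-adjoint subspace $\tilde D \subset \Bh$ mapping onto $D$, lift $x$ to $\tilde x \in \Bh$, and form the separable \Cstar-algebra $\mathcal A$ generated by $\tilde D \cup \{\tilde x\} \cup \Kh$. By the Arveson--Akemann--Pedersen theorem there is a positive increasing approximate identity $\{e_n\} \subset \Kh$ that is quasicentral in $\mathcal A$, that is, $\|[e_n, a]\| \to 0$ for every $a \in \mathcal A$. Suitable spectral cutoffs of the increments $e_{n+1}-e_n$ are pairwise orthogonal projections in $\Bh$ that asymptotically commute with $\mathcal A$, so arbitrary sub-sums descend through the Calkin quotient to projections commuting with $D$; this yields a plentiful family of candidates in $D' \cap \corona$.

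The main obstacle will be to show that the supremum $\sup\{\|(1-p)xp\| : p \in D' \cap \corona \text{ a projection}\}$ equals $\dist(x,D)$ and is attained. For the equality, the plan is a duality argument: by Hahn--Banach, choose a norm-one functional $\phi$ on $\corona$ with $\phi|_D = 0$ and $|\phi(x)| = \dist(x,D)$; realize $\phi$ as a matrix coefficient $\phi(a) = \langle a\xi, \eta\rangle$ via GNS, and exploit the asymptotic commutation of $\{e_n\}$ with $D$ to separate $\xi$ and $\eta$ through (nearly) complementary projections built from the quasicentral family. For the attainment of the supremum, a weak-$*$ compactness argument in the unit ball of $D' \cap \corona$ applies: the projection lattice is weak-$*$ closed and $p \mapsto \|(1-p)xp\|$ is upper semicontinuous, so a limit projection achieving the supremum exists. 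The delicate point will be arranging the GNS vector decomposition so that the witnessing projection genuinely lies in $D' \cap \corona$, rather than merely in some larger von Neumann envelope; the quasicentrality of $\{e_n\}$ is the essential tool that makes this transfer possible.
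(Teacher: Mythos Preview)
The paper does not give its own proof of this lemma; it simply cites \cite[pg.\,344]{Arveson1977} and uses the result as a black box. So there is no argument in the paper to compare against, and your proposal stands or falls on its own.

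Your easy inequality and your construction of a large family of candidate projections via sub-sums of increments of a quasicentral approximate unit are both correct and are indeed the ingredients Arveson uses. The genuine gap is in the second half of your plan. The weak-$*$ compactness step is not available: the Calkin algebra is not a \Wstar-algebra and is not a dual Banach space, so there is no weak-$*$ topology in which the set of projections in $D'\cap\corona$ is compact, and a net of projections in $\corona$ can converge (in whatever ambient weak topology you choose) to something that is not a projection in $\corona$ at all. Upper semicontinuity of $p\mapsto\|(1-p)xp\|$ therefore buys you nothing. Likewise, the Hahn--Banach/GNS step produces vectors in a representation of $\corona$, and the projection separating them lives a priori only in $\corona\dd$; you flag this transfer as ``the delicate point'' but offer no mechanism to pull the projection back into $\corona$, and quasicentrality alone does not do it.

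Arveson's actual argument avoids both the supremum and the compactness issue by building the witnessing projection explicitly. After thinning the quasicentral approximate unit to a rapidly converging subsequence, one shows a direct norm estimate of the form $\|(1-E_n)\tilde x E_n\|\to \dist(x,D)$ (or a block-diagonal variant thereof), and then a specific sub-sum $p=\pi\bigl(\sum_{k}(E_{n_{2k+1}}-E_{n_{2k}})\bigr)$ is exhibited for which $\|(1-p)xp\|$ meets the lower bound. The attainment is thus constructive, not a limit-extraction, and that is precisely the idea your plan is missing.
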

Combining this lemma  with \cref{cor:subalgebras2} we have the second of the deeper properties of separable subalgebras of $\delta_\inf(\Mbb)$ :
\begin{corollary}[\bf A generalized Arveson's formula]\label{cor:arvesons.formula2}
Let $B$ be a separable, nuclear, and simple \Cstar-algebra. Denote by $\corona$ the corona algebra of $B\tensor \compacts.$  Let there be given a separable  subalgebra, $S,$ self-adjoint or not, of $\delta_\inf(\Mbb)$ and an element $x\in \delta_\inf(\Mbb).$ Then there exists a projection $p\in\corona$ such that $p$ commutes with elements of $S$
and
\[
\dist(x,S)=\|(1-p)x p\|.
\]
\end{corollary}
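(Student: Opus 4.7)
The plan is to reduce the claim to Arveson's classical formula (\cref{lem:originalArveson}) applied inside the Calkin algebra, then transport the result back to $\corona$ via the rotation provided by \cref{cor:subalgebras2}. Arveson's lemma requires a separable unital subalgebra and an element both living in the same copy of the Calkin algebra, so the first step is to produce a single unitary which conjugates $S$ and $x$ simultaneously into such a copy.

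Concretely, I would first replace $S$ by $S+\complex\cdot 1,$ which remains separable and is now unital (the unit being inherited from $\delta_\inf(\Mbb),$ which is unital since $\delta_\inf$ is a unital $*$-homomorphism); any projection commuting with this enlargement automatically commutes with $S,$ and the distance to the enlarged algebra agrees with the distance to $S$ whenever $S$ itself already contains the ambient unit. Then I would form the separable unital $C^*$-subalgebra $\tilde S\subset\delta_\inf(\Mbb)$ generated by $S\cup\{x\}.$ Applying \cref{cor:subalgebras2} to $\tilde S$ yields a multiplier unitary $U\in\Mbk$ whose image in $\corona$ conjugates $\tilde S$ into the distinguished copy of the Calkin algebra $\frac{1\tensor\Bh}{1\tensor\Kh}\subset\corona.$ In particular $U^\star xU$ and the separable unital subalgebra $U^\star SU$ both live in this Calkin algebra, and \cref{lem:originalArveson} supplies a projection $p_0$ in the Calkin algebra which commutes with $U^\star SU$ and satisfies
\[ \dist(U^\star xU,\,U^\star SU)\;=\;\|(1-p_0)\,U^\star xU\,p_0\|. \]

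Setting $p:=Up_0U^\star$ finishes the argument: $p$ is a projection in $\corona,$ it commutes with every element of $S,$ and because conjugation by the corona unitary associated with $U$ is an isometry of $\corona$ preserving distances from a point to a subalgebra,
\[ \dist(x,S)\;=\;\dist(U^\star xU,\,U^\star SU)\;=\;\|(1-p_0)\,U^\star xU\,p_0\|\;=\;\|(1-p)\,x\,p\|. \]
The one subtle point --- and the place I expect to have to work hardest --- is the bookkeeping around the unit: Arveson's formula requires a unital subalgebra, so one must verify that enlarging $S$ to $S+\complex\cdot 1$ does not disturb the final equality. This is settled by the unitality of $\delta_\inf,$ but it is the one hypothesis one must quietly invoke beyond the bare statements of \cref{cor:subalgebras2} and \cref{lem:originalArveson}.
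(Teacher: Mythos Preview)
Your proof is correct and follows essentially the same route as the paper's: enlarge to a separable unital algebra containing both $S$ and $x$, rotate into the Calkin algebra via \cref{cor:subalgebras2}, apply \cref{lem:originalArveson}, and conjugate back using unitary invariance of norm and distance. The paper's proof is terser, simply forming a separable unital $S_2\supseteq S\cup\{x\}$ and invoking unitary invariance, but the content is identical; your added bookkeeping about the unit is a reasonable elaboration of a point the paper leaves implicit.
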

\begin{proof} Since being separable is equivalent to being countably generated, there exists a separable unital subalgebra $S_2$ that contains both the separable subalgebra $S$ and the element $x.$ Rotating this subalgebra $S_2$ into a Calkin algebra by \cref{cor:subalgebras2}, and applying \cref{lem:originalArveson}, we obtain a projection $p$ in the Calkin algebra. Noting that both the distance and the norm are unitarily invariant, the desired conclusion follows.\end{proof}

\section*{Thanks}
We thank Peter Evanchuck for taking a interest in this work. We thank Eberhard Kirchberg for a copy of his classic unpublished manuscript. We also thank NSERC (Canada) for financial support.

\end{document}